\newtheorem{theorem}{Theorem}[section]
\newtheorem{lemma}[theorem]{Lemma}
\newtheorem{proposition}[theorem]{Proposition}
\newtheorem{corollary}[theorem]{Corollary}
\newtheorem{remark}[theorem]{Remark}
\newtheorem{definition}[theorem]{Definition}
\begin{document}

\title[]{Teich\-m\"ul\-ler spaces of generalized symmetric homeomorphisms} 

\author[H. Wei]{Huaying Wei} 
\address{Department of Mathematics and Statistics, Jiangsu Normal University \endgraf Xuzhou 221116, PR China} 
\email{hywei@jsnu.edu.cn} 

\author[K. Matsuzaki]{Katsuhiko Matsuzaki}
\address{Department of Mathematics, School of Education, Waseda University \endgraf
Shinjuku, Tokyo 169-8050, Japan}
\email{matsuzak@waseda.jp}

\subjclass[2010]{Primary 30F60, 30C62, 32G15; Secondary 37E10, 58D05}
\keywords{Symmetric homeomorphism, asymptotic Teich\-m\"ul\-ler space, Bers embedding, barycentric extension}
\thanks{Research supported by the National Natural Science Foundation of China (Grant No. 11501259)
and Japan Society for the Promotion of Science (KAKENHI 18H01125).}

\begin{abstract}
We introduce the concept of a new kind of symmetric homeomorphisms on the unit circle, which is derived from the generalization of 
symmetric homeomorphisms on the real line. By the investigation of the barycentric extension 
for this class of circle homeomorphisms and the biholomorphic automorphisms induced by trivial Beltrami coefficients, 
we endow a complex Banach manifold structure on the space of those generalized symmetric homeomorphisms.  
\end{abstract}

\maketitle

\section{Introduction}
The universal Teich\-m\"ul\-ler space plays a fundamental role in the quasiconformal theory of Teich\-m\"ul\-ler spaces, and it is also an important object in mathematical physics. 
The universal Teich\-m\"ul\-ler space $T$ 
can be defined as the group $\rm QS$ of all quasisymmetric homeomorphisms of the unit circle 
$\mathbb{S} = \{z\in\mathbb{C} \mid  |z|=1\}$ modulo the left action of the group 
$\mbox{\rm M\"ob}(\mathbb{S})$ of all M\"obius transformations of $\mathbb{S}$, i.e., 
$T= \mbox{\rm M\"ob}(\mathbb{S}) \backslash \rm QS$. It can also be defined on the real line $\mathbb{R}$ by the conjugation of  
a M\"obius transformation. 

Several subclasses of quasisymmetric homeomorphisms and their Teich\-m\"ul\-ler spaces, spread in different directions,  were introduced and studied for various purposes in the literature. We refer to the books \cite{Ah66, GL, Le,Na88} and the papers \cite{AZ,Cu,GS,Ma,Se,SW,TT} for  introducing the subject matters in more details. Our work in this paper is mainly based on the subclass consisting of all symmetric homeomorphisms on the real line $\mathbb{R}$, and is motivated by recent work of Hu, Wu, and Shen \cite{HWS}.

A symmetric homeomorphism on the real line $\mathbb{R}$ was first studied in \cite{Ca} when Carleson discussed absolute continuity of  quasisymmetric homeomorphisms. It was proved that $h$ is symmetric if and only if $h$ can be extended to an asymptotically conformal homeo\-morphism $f$ of the upper half-plane $\mathbb{U}$ onto itself. Later, Gardiner and Sullivan \cite{GS} introduced the
concept of the symmetric structure on $\mathbb S$ by relying on this relationship.
By an asymptotically conformal homeomorphism $f$ of the upper half-plane $\mathbb{U}$, we mean that its complex dilatation 
$\mu = \Bar{\partial}f/\partial f$ satisfies that
$$
{\rm ess}\!\!\!\!\sup_{y \leq t\qquad}\!\!\!\!|\mu(x+iy)| \to 0 \quad (t \to 0).
$$
In fact, the Beurling--Ahlfors extension of $h$ is asymptotically conformal when $h$ is symmetric (see \cite{Ca,GS,Ma}). 
Based on these results, Hu, Wu, and Shen \cite{HWS} endowed the symmetric Teich\-m\"ul\-ler space $T_*(\mathbb R)$
on the real line with a complex Banach manifold structure. 
Namely, $T_*(\mathbb R)$ can be embedded as a bounded domain in a certain Banach space.

The conjugation by the Cayley transformation $\phi_{\xi}(z) = \xi(z-i)/(z+i)$ 
that maps $\mathbb{R}$ onto $\mathbb{S}$ with $\infty \mapsto \xi$ can transfer a symmetric homeomorphism $h$ of $\mathbb{R}$ to $\hat{h} = \phi_{\xi}\circ h\circ (\phi_{\xi})^{-1}$ of $\mathbb{S}$. Then, we see that
$\hat{h}$ extends to a quasiconformal self-homeomorphism of the unit disk $\mathbb{D}$ whose complex dilatation 
can be arbitrarily small outside some horoball tangent at $\xi$ in $\mathbb{D}$. 
In this paper, we generalize this transformation for only one tangent point to the case of plural tangent points by specifying 
a set $X$ of points $\xi \in \mathbb{S}$. We define the set ${\rm QS}^X_*$ of the generalized symmetric homeomorphisms for 
$X \subset \mathbb{S}$ by the boundary extension of quasiconformal self-homeomorphisms of $\mathbb{D}$
whose complex dilatations are arbitrarily small outside some horoballs tangent at all $\xi \in X$. 
We denote the set of such complex dilatations by $M^X_*(\mathbb D)$.
Then, we introduce 
the generalized symmetric Teich\-m\"ul\-ler space $T^X_*$ by ${\rm QS}^X_*$ modulo $\mbox{\rm M\"ob}(\mathbb{S})$
as well as the Teich\-m\"ul\-ler projection $\pi:M^X_*(\mathbb D) \to T^X_*$.

The Teich\-m\"ul\-ler space $T^X_*$ is of interest because it lies between the universal Teich\-m\"ul\-ler space $T$ and 
its little subspace $T_0=\mbox{\rm M\"ob}(\mathbb{S}) \backslash {\rm Sym}$ made of the symmetric homeomorphisms on $\mathbb S$.
We expect that a family $T^{X_n}_*$ defined by a sequence of increasing subsets $X_n \subset \mathbb S$ 
can give an interpolation between $T_0$ and $T$.

To investigate the structure of $T^X_*$, we show that
the barycentric extension due to Douady and Earle \cite{DE} gives a proper quasiconformal extension 
for the elements of ${\rm QS}^X_*$ to those whose complex dilatations are in $M^X_*(\mathbb D)$. 
The proof is given by an adaptation of the argument by Earle, Markovic, and Saric \cite{EMS}.
After this, we can endow $T_*^X$ with a complex Banach manifold structure as a bounded domain in the corresponding Banach space 
$B^X_*(\mathbb D^*)$. 
The proof is carried out by using the Bers Schwarzian derivative map $\Phi:M^X_*(\mathbb D) \to B^X_*(\mathbb D^*)$
and showing that it is a holomorphic split submersion as usual.
At this stage, the barycentric extension is also useful.
Moreover, we point out that due to lack of a group structure on $M^X_*(\mathbb D)$, we need some careful arguments
for holomorphic split submersion differently from the usual case.

%%%%%%%%%%%%%%%%%%%%%%%%%%%%%%%

\section{Preliminaries}
In this section, we review basic facts on the universal Teich\-m\"ul\-ler space
and the symmetric Teich\-m\"ul\-ler space on the real line.

\subsection{Universal Teich\-m\"ul\-ler space}
We begin with a standard theory of the universal Teich\-m\"ul\-ler space. For details, we can refer to monographs
\cite{GL, Le, Na88}.
The universal Teich\-m\"ul\-ler space $T$ is a universal parameter space for 
the complex structures on all Riemann surfaces and can be defined as the space of all normalized quasisymmetric homeomorphisms on $\mathbb{S}$, namely, 
$T= \mbox{\rm M\"ob}(\mathbb{S}) \backslash \rm QS$.
A topology of $T$ can be defined by quasisymmetry constants of quasi\-symmetric homeo\-morphisms.
There are several ways to introduce quasisymmetric homeomorphisms on $\mathbb S$; in this paper, we 
lift $h:\mathbb{S} \to \mathbb{S}$
to $\tilde h:\mathbb{R} \to \mathbb{R}$ against the universal covering projection $\mathbb{R} \to \mathbb{S}$ with $x \mapsto e^{ix}$
and apply the definition of the quasisymmetry on $\mathbb R$ given in the next subsection.

The universal Teich\-m\"ul\-ler space $T$ can be also defined by using quasiconformal homeo\-morphisms $f$ of the unit disk 
$\mathbb{D}=\{z \in \mathbb{C} \mid |z|<1\}$ with complex dilatations $\mu_f=\bar \partial f/\partial f$
in the open unit ball $M(\mathbb{D})$ of the Banach space $L^{\infty}(\mathbb{D})$ 
of essentially bounded measurable functions on the unit disk $\mathbb{D}$. 
More precisely, for $\mu \in M(\mathbb{D})$, the solution of the
Beltrami equation (the measurable Riemann mapping theorem (see \cite{Ah66})) gives 
the unique quasiconformal homeomorphism $f^{\mu}$ of $\mathbb{D}$ onto itself that has complex dilatation $\mu$ 
and satisfies a certain normalization condition. 
This condition can be given by fixing three distinct points on $\mathbb{S}$,
for example, $1, i, -1$. We note that $f^\mu$ extends continuously to $\mathbb S$; the fixed point condition above
is applied to this extension.
This normalization cancels the freedom of post-composition of M\"obius transformations. 
By giving the normalization, $M(\mathbb{D})$ becomes a group with operation $*$,
where $\mu \ast \nu$ for $\mu, \nu \in M(\mathbb{D})$ is defined as the complex dilatation of $f^\mu \circ f^\nu$.
The inverse $\nu^{-1}$ denotes the complex dilatation of $(f^\nu)^{-1}$.

It is known that the continuous extension of $f^\mu$ to $\mathbb{S}$, denoted by the same $f^\mu$,
is a quasi\-symmetric homeo\-morphism of $\mathbb S$.
Conversely, any normalized (i.e., keeping the points $1, i, -1$ fixed) quasi\-symmetric homeo\-morphism of $\mathbb S$ extends continuously to a quasiconformal homeo\-morphism $f^\mu$ of $\mathbb D$ for some $\mu \in M({\mathbb D})$.
We say that $\mu$ and $\nu$ in $M(\mathbb{D})$ are equivalent ($\mu \sim \nu$), 
if $f^{\mu}=f^{\nu}$ on the unit circle $\mathbb{S}$. We denote the equivalence class of $\mu$ by $[\mu]$. Then,
the correspondence $[\mu] \mapsto f^{\mu}|_{\mathbb{S}}$ establishes a bijection 
from $M(\mathbb{D})/{\sim}$ onto $T$. Thus, the universal Teich\-m\"ul\-ler space $T$
is identified with $M(\mathbb{D})/{\sim}$. 
The topology of $T=\mbox{\rm M\"ob}(\mathbb{S}) \backslash \rm QS$ 
coincides with the quotient topology induced by the {\it Teich\-m\"ul\-ler projection} $\pi:M(\mathbb D) \to T$.

The universal Teich\-m\"ul\-ler space $T$ is also identified with a domain in the Banach space 
$$
B(\mathbb{D}^{*})=\{\varphi \mid \Vert \varphi \Vert_B={\rm sup}_{z \in \mathbb D^*} \rho_{\mathbb{D}^*}^{-2}(z)|\varphi(z)|<\infty\}
$$ 
of bounded holomorphic quadratic differentials $\varphi=\varphi(z)dz^2$ on 
$\mathbb{D}^{*}=\widehat{\mathbb C}-\overline{\mathbb{D}}$ under the {\it Bers embedding} $\beta: T \to B(\mathbb{D}^{*})$.
Here, $\rho_{\mathbb{D}^*}(z)=(|z|^2-1)^{-1}$ denotes the hyperbolic density on $\mathbb{D}^*$.
This map is given by the factorization of a map 
$\Phi:M(\mathbb D) \to B(\mathbb{D}^{*})$ by the Teich\-m\"ul\-ler projection $\pi$, i.e.,
$\beta \circ \pi=\Phi$. Here, for every $\mu \in  M(\mathbb D)$, $\Phi(\mu)$ is defined by the Schwarzian derivative 
${\mathcal S}(f_\mu|_{\mathbb D^*})$, where $f_{\mu}$ is a quasiconformal homeomorphism of the complex plane $\widehat{\mathbb{C}}$ that has complex dilatation $\mu$ in $\mathbb{D}$ and is conformal in $\mathbb{D}^*$. The map $\Phi$ is called the {\it Bers Schwarzian derivative map}.

The Bers embedding $\beta: T \to B(\mathbb{D}^{*})$ is a homeomorphism onto the image $\beta(T)=\Phi(M(\mathbb D))$, 
and it defines a complex structure of $T$
as a domain in the Banach space $B(\mathbb{D}^{*})$.
It is proved that $\Phi$ (and so is $\pi$) is a holomorphic split submersion from $M(\mathbb{D})$ onto its image. 

The barycentric extension due to Douady and Earle \cite{DE} gives a quasiconformal extension $E(h) \in {\rm QC}(\mathbb{D})$ of
a quasisymmetric homeomorphism $h \in {\rm QS}$ in a conformally natural way.
This means that $E(g_1 \circ h \circ g_2)=E(g_1) \circ E(h) \circ E(g_2)$ is satisfied for any $h \in {\rm QS}$ and 
any $g_1, g_2 \in \mbox{\rm M\"ob}(\mathbb{S})$, where the extensions $E(g_1)$ and $E(g_2)$ are conformal (M\"obius) on $\mathbb{D}$.
The quasiconformal extension $E(h)$ is a diffeomorphism of $\mathbb{D}$ that is bi-Lipschitz with respect to the hyperbolic metric.
The barycentric extension
induces a continuous (in fact, real analytic) section $s: T \to M(\mathbb{D})$ of the Teich\-m\"ul\-ler projection $\pi:M(\mathbb{D}) \to T$ ($\pi \circ s={\rm id}_T$) by sending a point $[\mu] \in T$ to the complex dilatation $s([\mu]) \in M(\mathbb{D})$ of $E(f^{\mu}|_{\mathbb{S}})$. 
By the conformal naturality of the barycentric extension,
the Teich\-m\"ul\-ler space of any Fuchsian group is shown to be contractible. 

\subsection{Symmetric Teich\-m\"ul\-ler space on the real line}
An increasing homeomorphism $h$ of the real line  $\mathbb{R}$ onto itself is said to be {\it quasisymmetric} if there exists some $M \geq 1$ such that 
$$
\frac{1}{M} \leq \frac{h(x+t)-h(x)}{h(x)-h(x-t)}\leq M
$$
for all $x\in\mathbb{R}$ and $t>0$. The optimal value of such $M$ is called the quasisymmetry constant for $h$.

Beurling and Ahlfors \cite{BA} proved that $h$ is quasisymmetric if and only if there exists some quasiconformal homeomorphism of the upper half-plane $\mathbb{U}=\{x+iy \in \mathbb{C} \mid y>0\}$ onto itself that is continuously extendable to the boundary map $h$. 
Let $\rm QS(\mathbb{R})$ denote the group of all quasisymmetric homeomorphisms of the real line $\mathbb{R}$.

A quasisymmetric homeomorphism $h$ is said to be {\it symmetric} if 
$$ 
\lim_{t\to 0+}\frac{h(x+t)-h(x)}{h(x)-h(x-t)}=1 
$$
uniformly for all $x\in\mathbb{R}$.
Let $\rm QS_{*}(\mathbb{R})$ denote the subset of $\rm QS(\mathbb{R})$ (in fact, this is not a subgroup
as shown in \cite{WM}) consisting of all symmetric homeomorphisms of the real line $\mathbb{R}$. It is known that $h$ is symmetric if and only if $h$ can be extended to an asymptotically conformal homeomorphism $f$ 
of the upper half-plane $\mathbb{U}$ onto itself (see \cite{GS}). 
In fact, the Beurling--Ahlfors extension of $h$ is asymptotically conformal when $h$ is symmetric. 
By an {\it asymptotically conformal} homeomorphism $f$ of the upper half-plane $\mathbb{U}$, we mean that its complex dilatation 
$\nu_f=\bar \partial f/\partial f$ belongs to $M_*(\mathbb{U})=L_*(\mathbb{U}) \cap M(\mathbb{U})$, where
$M(\mathbb{U})$ is the open unit ball of $L^\infty(\mathbb{U})$ and
$$
L_*(\mathbb{U})=\{ \nu \in L^\infty (\mathbb{U}) \mid  \forall\; \epsilon > 0, \exists\; t > 0\; {\rm such\; that}\; 
\Vert \nu|_{\mathbb{U} \setminus H_t}\Vert_{\infty} < \epsilon \}.
$$
Here, $H_t = \{x+iy \in \mathbb{U} \mid y \geq t\}$ $(t>0)$ is a horoplane tangent at $\infty$. 

We define $T_*(\mathbb{R})= \rm  Aff(\mathbb{R}) \backslash QS_*(\mathbb{R})$ as the {\it symmetric Teich\-m\"ul\-ler space on the real line} $\mathbb R$, where ${\rm Aff(\mathbb{R})}$ denotes
the subgroup of all real affine mappings $z \mapsto az+b$, $a>0, b\in\mathbb{R}$. 
Recently, Hu, Wu, and Shen \cite{HWS} endows $T_*(\mathbb{R})$ 
with a complex Banach manifold structure modeled on the closed subspace $B_*(\mathbb{L})$ 
of the Banach space 
$$
B(\mathbb{L})=\{\psi \mid \Vert \psi \Vert_B={\rm sup}_{z\in \mathbb{L}}\rho_{\mathbb{L}}^{-2}(z)|\psi(z)|<\infty \}
$$ 
of bounded holomorphic quadratic differentials on the lower half-plane $\mathbb{L}$, 
which consists of those $\psi$ 
satisfying that  
for any $\varepsilon > 0$, there exists $t>0$ such that 
$$
\sup_{z\in \mathbb{L} \setminus H_{t}^{*}}|\psi(z)|\rho_{\mathbb{L}}^{-2}(z) <\varepsilon.
$$
Here, $\rho_{\mathbb{L}}(z)=(2\,{\rm Im}\, z)^{-1}$ is the hyperbolic density on the lower half-plane 
$\mathbb{L}=\{x+iy \in \mathbb{C} \mid y<0\}$ and  
$H_{t}^{*}=\{x+iy \in \mathbb{L}\mid -y \geqslant t\}$ 
is the reflection of the horoplane $H_{t}$ as above with respect to the real line $\mathbb{R}$.

A quasisymmetric homeomorphism $h \in {\rm QS}$ on $\mathbb{S}$ is also called symmetric if its lift $\tilde h:\mathbb{R} \to \mathbb{R}$ is symmetric on $\mathbb{R}$ in the above sense.
We denote the subgroup of $\rm QS$ consisting of all symmetric homeomorphisms of $\mathbb{S}$ by $\rm Sym$.
Then,  the little universal Teich\-m\"ul\-ler space
was defined by $T_0=\mbox{\rm M\"ob}(\mathbb{S}) \backslash \rm Sym$, and have been studied
in the theory of asymptotic Teich\-m\"ul\-ler space (see \cite{EGL, GS}).
The universal Teich\-m\"ul\-ler space can be also defined on the real line 
by $T(\mathbb{R})=\rm  Aff(\mathbb{R}) \backslash QS(\mathbb{R})$ and this is isomorphic to
$T=\mbox{\rm M\"ob}(\mathbb{S}) \backslash \rm QS$ under the conjugation by the Cayley transformation. 
We note however that $T_*({\mathbb R})$ is not isomorphic to $T_0$ under this isomorphism $T(\mathbb{R}) \cong T$
(see \cite{HWS}).

%%%%%%%%%%%%%%%%%%%%%%%%%%%%%%%

\section{Generalized symmetric Teich\-m\"ul\-ler space}

In this section, we will introduce generalized symmetric homeomorphisms and 
the generalized symmetric Teich\-m\"ul\-ler space by transferring 
$\rm QS_*(\mathbb{R})$ to the unit circle $\mathbb{S}$ and extending it to the general case by
specifying a set of points $\xi\in\mathbb{S}$. 
We also transfer $M_*(\mathbb{U})$ to the unit disk $\mathbb{D}$ and $B_*(\mathbb{L})$ to the exterior of the unit disk $\mathbb{D}^*$. 

For every $\xi \in \mathbb{S}$,
let $\phi_{\xi}(z) = \xi (z-i)/(z+i)$ be the Cayley transformation of $\mathbb{U}$ onto $\mathbb{D}$ 
that sends $\infty$ to $\xi$ and $i$ to $0$. 
Then, the push-forward operator $(\phi_{\xi})_{*}:L^\infty(\mathbb{U}) \to L^\infty(\mathbb{D})$ defined by
$$
(\phi_{\xi})_{*}\nu =
\nu\circ \phi_{\xi}^{-1}\overline{(\phi_{\xi}^{-1})'}/(\phi_{\xi}^{-1})' 
$$
for every $\nu \in L^\infty(\mathbb{U})$ is a linear isometry. Let $L_*^{\xi}(\mathbb{D})=(\phi_{\xi})_{*}(L_*(\mathbb{U}))$ and
$M_*^{\xi}(\mathbb{D})=(\phi_{\xi})_{*}(M_*(\mathbb{U}))$. Clearly, $M_*^{\xi}(\mathbb{D})=L_*^{\xi}(\mathbb{D}) \cap M(\mathbb{D})$.

For our purpose of generalization, we represent $L_*^{\xi}(\mathbb{D})$ as follows.
We consider a horoball $D_{t}^{\xi} = \phi_{\xi}(H_t)$ for $t>0$, 
which is tangent at $\xi$ in $\mathbb{D}$ with the boundary 
$$
\partial D_{t}^{\xi} = \left\{\xi\,\frac{x+i(t-1)}{x+i(t+1)} \in {\mathbb D}\, \middle|\, x\in\mathbb{R}\right\}.
$$
Then, it is clear that 
$$
L_*^{\xi}(\mathbb{D})=\{\mu\in L^{\infty}(\mathbb{D})\mid  \forall\; \epsilon > 0, \exists\; t > 0\; {\rm such\; that}\; 
\Vert \mu|_{\mathbb{D} \setminus D_t^{\xi}}\Vert_{\infty} < \epsilon\}.
$$
 
Now we extend the definition above for only one tangent point to the case of plural tangent points. 
Let $X=\{\xi_1, \xi_2, \dots, \xi_n\}$ be a finite subset of $\mathbb{S}$. Let 
$$
L_*^{X}(\mathbb{D})=\{\mu\in L^{\infty}(\mathbb{D})\mid  \forall\; \epsilon > 0, \exists\; t > 0\; {\rm such\; that}\; 
\Vert \mu|_{\mathbb{D} \setminus \bigcup_{i=1}^{n}D_t^{\xi_i}}\Vert_{\infty} < \epsilon\},
$$
and $M_{*}^{X}(\mathbb{D})=L_{*}^{X}(\mathbb{D})\cap M(\mathbb{D})$. 
We see that $L_*^X(\mathbb{D})$ is closed in $L^{\infty}(\mathbb{D})$. Indeed, 
assuming that a sequence $\{\mu_k\}_{k \in \mathbb{N}}$ in $L_*^X(\mathbb{D})$ and $\mu \in L^{\infty}(\mathbb{D})$ are 
given so that $\Vert\mu_k-\mu\Vert_{\infty}\to 0$ as $k\to\infty$, we show that $\mu\in L_*^X(\mathbb{D})$.
For each $\varepsilon>0$, we can choose some $k_0 \in\mathbb{N}$ 
such that $\Vert\mu_{k_0}-\mu\Vert_{\infty}< \varepsilon$. Since $\mu_{k_0}\in L_*^X(\mathbb{D})$, 
there exists some $t>0$ such that $\Vert\mu_{k_0}|_{\mathbb{D} \setminus \bigcup_{i=1}^{n}D_{t}^{\xi_i}}\Vert_{\infty}<\varepsilon$. Thus, 
$$
\Vert\mu|_{\mathbb{D} \setminus \bigcup_{i=1}^{n}D_{t}^{\xi_i}}\Vert_{\infty} \leqslant \Vert\mu_{k_0}|_{\mathbb{D} \setminus \bigcup_{i=1}^{n}D_{t}^{\xi_i}}\Vert_{\infty}+\Vert\mu_{k_0}-\mu\Vert_{\infty}<2\varepsilon,
$$
which implies that $\mu\in L_*^X(\mathbb{D})$.

Here, we note the following fact on an algebraic structure of the space of the Beltrami differentials.

\begin{proposition}\label{L}
For any $X=\{\xi_1,\dots \xi_n\} \subset \mathbb {S}$,
$L_{*}^{X}(\mathbb{D})=L_{*}^{\xi_1}(\mathbb{D})+\dots +L_{*}^{\xi_n}(\mathbb{D})$.
\end{proposition}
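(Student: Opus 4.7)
The proof splits into two inclusions. The direction ``$\supset$'' is immediate: since $D_t^{\xi_i} \subset \bigcup_{j=1}^n D_t^{\xi_j}$, the essential supremum is taken over a smaller set on the right of the definition of $L_*^X$, so $L_*^{\xi_i}(\mathbb{D}) \subset L_*^X(\mathbb{D})$ for each $i$. A routine $\varepsilon/2$ argument (using that smaller $t$ enlarges the horoballs and shrinks $\mathbb{D}\setminus\bigcup_j D_t^{\xi_j}$) shows $L_*^X(\mathbb{D})$ is closed under addition, so $L_*^{\xi_1}(\mathbb{D})+\cdots+L_*^{\xi_n}(\mathbb{D}) \subset L_*^X(\mathbb{D})$.

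For the non-trivial inclusion ``$\subset$'', I would decompose $\mu \in L_*^X(\mathbb{D})$ using an angular partition of unity adapted to $X$. Write $\xi_k = e^{i\theta_k}$ and pick continuous $2\pi$-periodic functions $\chi_1,\dots,\chi_n:\mathbb{R}\to[0,1]$ with $\chi_k(\theta_k)=1$, $\chi_k(\theta_j)=0$ for $j\ne k$, and $\sum_k \chi_k\equiv 1$ (for instance, piecewise linear tent functions on the arcs determined by $X$). View each $\chi_k$ as a function on $\mathbb{D}$ by $\chi_k(re^{i\theta}):=\chi_k(\theta)$ and set $\mu_k:=\chi_k\mu \in L^\infty(\mathbb{D})$. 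Then $\mu=\sum_k \mu_k$, so it suffices to prove $\mu_k \in L_*^{\xi_k}(\mathbb{D})$ for each $k$.

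To verify this, fix $\varepsilon>0$. By continuity of $\chi_k$ at each $\theta_j$ with $j\ne k$, choose $\delta>0$ so small that $|\chi_k(\theta)|<\varepsilon/(\|\mu\|_\infty+1)$ whenever $|\theta-\theta_j|<\delta$ for some $j\ne k$; let $A$ be the union of these angular sectors inside $\mathbb{D}$. Since $\mu \in L_*^X(\mathbb{D})$, choose $s>0$ with $\|\mu|_{\mathbb{D}\setminus \bigcup_j D_s^{\xi_j}}\|_\infty<\varepsilon$. Now split $\mathbb{D}\setminus D_t^{\xi_k}$ as $A\cap(\mathbb{D}\setminus D_t^{\xi_k})$, on which $|\mu_k|=|\mu||\chi_k|<\varepsilon$ by the choice of $\delta$, and the remainder $(\mathbb{D}\setminus A)\cap(\mathbb{D}\setminus D_t^{\xi_k})$. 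For any $z=re^{i\theta}$ in the remainder, the angular condition forces $|z-\xi_j|^2\ge \sin^2\delta$ for $j\ne k$, while $z\notin D_t^{\xi_k}$ forces $1-r^2<t|z-\xi_k|^2\le 4t$. Therefore $(1-r^2)/|z-\xi_j|^2 \le 4t/\sin^2\delta$, and if $t$ is taken small enough that $t\le s$ and $4t/\sin^2\delta<s$, this remainder lies inside $\mathbb{D}\setminus\bigcup_j D_s^{\xi_j}$, giving $|\mu_k|\le|\mu|<\varepsilon$ there as well.

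The main obstacle is this last geometric step and, in particular, the order in which the parameters must be chosen: $\varepsilon$ fixes $\delta$, which together with $\mu \in L_*^X(\mathbb{D})$ fixes $s$, and only then can $t$ be taken small enough relative to $s\sin^2\delta$. The underlying reason this works is that, for fixed angular separation $\delta$, each horoball $D_s^{\xi_j}$ with $j\ne k$ meets the thin collar $\{1-r^2\le 4t\}$ only in a cusp at $\xi_j$ that is narrower than $\delta$ once $t$ is sufficiently small compared to $s$. This is also what forces the cutoff to be purely angular; a cutoff varying in $r$ would entangle the horoballs at different $\xi_j$ and break the estimate.
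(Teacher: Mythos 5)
Your proof is correct and follows essentially the same route as the paper: the paper decomposes $\mu$ by the indicator functions of $n$ sectors (bounded by radii to the midpoints of the arcs determined by $X$), which is just the sharp-cutoff version of your angular partition of unity, and the geometric estimate you spell out --- that $z\notin D_t^{\xi_k}$ forces $1-|z|^2\le 4t$, so angular separation from $\xi_j$ pushes $z$ outside $D_s^{\xi_j}$ once $t$ is small relative to $s$ and $\delta$ --- is exactly the fact the paper leaves implicit in the word ``simply.'' No gap; your version merely makes the verification explicit.
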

\begin{proof}
The inclusion $\supset$ is easy to see. 
For the inverse inclusion $\subset$, we take any element $\mu$ in $L_{*}^{X}(\mathbb{D})$. 
The unit circle $\mathbb{S}$ is divided into $n$ sub-arcs by the points $\xi_1, \dots, \xi_n$. 
We take the midpoint of each sub-arc and connect it to the origin $0$ by a segment. 
The union of these segments divide $\mathbb{D}$ into $n$ sectors $E_1, \dots, E_n$, and
each $E_i$ $(i=1,\dots, n)$ contains only one $\xi_i$ on its boundary. 
Then, the decomposition of $\mu$ is given simply by restricting $\mu$ to each sector;
$\mu=\mu 1_{E_1}+ \dots +\mu 1_{E_n}$, where $\mu 1_{E_i} \in L_{*}^{\xi_i}(\mathbb{D})$ for each $i=1,\dots,n$.
\end{proof}

This implies that
$$
M_{*}^{X}(\mathbb{D})=(L_{*}^{\xi_1}(\mathbb{D})+\dots +L_{*}^{\xi_n}(\mathbb{D}))\cap M(\mathbb{D}).
$$

For $\mu \in M_{*}^{X}(\mathbb{D})$, a quasisymmetric homeomorphism
obtained by the boundary extension of a quasiconformal homeomorphism of $\mathbb{D}$ onto itself with
dilatation $\mu$ is called a {\it generalized symmetric homeomorphism} for $X$.
The subset of $\rm QS$ consisting of all such elements is denoted by ${\rm QS}_*^X$.
We remark that this is not a subgroup of $\rm QS$.

\begin{definition}
{\rm
Let $X \subset \mathbb{S}$ be a finite subset. The {\it generalized symmetric Teich\-m\"ul\-ler space} $T_*^X$
for $X$ is defined as 
$$
T_*^X=\mbox{\rm M\"ob}(\mathbb{S}) \backslash {\rm QS}_*^X=\pi(M_*^X(\mathbb{D})).
$$
}
\end{definition}

\begin{remark}
{\rm
The Teich\-m\"ul\-ler spaces $T$ and $T_0$ have a group structure by
the composition of the normalized elements of $\rm QS$. However, $T_*^X$ is not a subgroup of $T$
even if $X$ consists of only one point. See \cite{WM}.
}
\end{remark}

Although we will not pursue the characterization of generalized symmetric homeomorphisms $h$ for $X$
as the mapping on $\mathbb{S}$, we can expect the following claim.
For each interval $I \subset \mathbb{S}$ between consecutive points of $X$,
by stretching the map $h:I \to h(I)$ linearly and giving rotations to both sides, we obtain its conjugate 
$\check h_I:\mathbb{S}-\{1\} \to \mathbb{S}-\{1\}$. Then,  
$\phi_1^{-1} \circ \check h_I \circ \phi_1: \mathbb{R} \to \mathbb{R}$ is symmetric for every $I$ 
if and only if $h \in {\rm QS}_*^X$.

%%%%%%%%%%%%%%%%%%%%%%%%%%%%%%%

\section{Bers Schwarzian derivative map}

In this section, we focus on the Bers Schwarzian derivative map $\Phi:M(\mathbb{D}) \to B(\mathbb{D}^*)$
restricted to the subspace $M_*^X(\mathbb{D})$. We first introduce the corresponding subspace of $B(\mathbb{D}^*)$.

For this purpose, we use the same Cayley transformation $\phi_{\xi}(z) = \xi (z-i)/(z+i)$ as before for every $\xi \in \mathbb{S}$.
This also maps $\mathbb{L}$ onto $\mathbb{D}^*$ 
sending $\infty$ to $\xi$ and $-i$ to $\infty$. 
The push-forward operator $(\phi_{\xi})_{*}:B(\mathbb{L}) \to B(\mathbb{D}^*)$ defined by
$$
(\phi_{\xi})_{*}\psi = \psi\circ \phi_{\xi}^{-1}(\phi_{\xi}^{-1})'^{2} 
$$
for every $\psi \in B(\mathbb{L})$ is a linear isometry. 
Let $B_{*}^{\xi}(\mathbb{D}^*)=(\phi_{\xi})_{*}(B_*(\mathbb{L}))$.

We consider a horoball $(D_{t}^{\xi})^* = \phi_{\xi}(H_{t}^{*})$ tangent at $\xi$ in $\mathbb{D}^{*}$ 
such that 
$$
\partial (D_{t}^{\xi})^*=\left \{\xi\,\frac{x+i(t+1)}{x+i(t-1)} \in \mathbb{D}^*\, \middle|\, x \in\mathbb{R}\right\}.
$$ 
This is the reflection of $D_{t}^{\xi}$ with respect to $\mathbb{S}$. 
Then, we see that
$$
B_*^{\xi}(\mathbb{D}^*)=\{\varphi\in B(\mathbb{D}^{*})\mid  \forall\; \varepsilon > 0, \exists\; t > 0\; {\rm such\; that}\; 
\Vert \varphi|_{\mathbb{D}^* \setminus (D_{t}^{\xi})^*} \Vert_B < \varepsilon\}.
$$
For a finite subset $X=\{\xi_1, \xi_2, \dots, \xi_n\} \subset \mathbb{S}$, 
we also generalize this to 
$$
B_*^{X}(\mathbb{D}^*)=\{\varphi\in B(\mathbb{D}^{*})\mid  \forall\; \varepsilon > 0, \exists\; t > 0\; {\rm such\; that}\; 
\Vert \varphi|_{\mathbb{D}^* \setminus \bigcup_{i=1}^{n}(D_{t}^{\xi_i})^*}\Vert_B < \varepsilon\},
$$
which is the desired Banach subspace of $B(\mathbb{D}^*)$.

By the following theorem, we see that $B_*^X(\mathbb{D}^*)$ is the appropriate space 
corresponding to $M_*^X(\mathbb{D})$
under the Bers Schwarzian derivative map $\Phi$.

\begin{theorem}\label{Bers}
For every finite subset $X \subset \mathbb{S}$,
the Bers Schwarzian derivative map $\Phi$ maps $M_*^X(\mathbb{D})$ into $B_*^X(\mathbb{D}^*)$. 
\end{theorem}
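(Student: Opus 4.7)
The plan is to derive the theorem from a pointwise localization estimate for the Bers Schwarzian derivative, together with the hyperbolic geometry of the horoballs $D_t^{\xi_i}$. The single-point case $X=\{\xi\}$ already follows from known results: the cocycle identity $\mathcal{S}(f\circ g) = \mathcal{S}(f)\circ g\cdot(g')^2 + \mathcal{S}(g)$, applied with $g=\phi_\xi$ M\"obius (so $\mathcal{S}(\phi_\xi)=0$), intertwines $\Phi$ on $\mathbb{D}$ with its half-plane analogue via the push-forward $(\phi_\xi)_*$. Hu--Wu--Shen's inclusion $\Phi(M_*(\mathbb{U}))\subset B_*(\mathbb{L})$ from \cite{HWS} then transports immediately to $\Phi(M_*^\xi(\mathbb{D}))\subset B_*^\xi(\mathbb{D}^*)$ by the very definitions of these two spaces.

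For $|X|\geq 2$, Proposition \ref{L} does not lift to $\Phi$ because the Bers Schwarzian is nonlinear in $\mu$; I would instead invoke the localization estimate: there are constants $C=C(k)$ and $\alpha=\alpha(k)>0$ such that for every $\mu\in M(\mathbb{D})$ with $\Vert\mu\Vert_\infty\leq k$, every $z_0\in\mathbb{D}^*$ with reflection $z_0^*=1/\bar z_0$, and every $R>0$,
\[
|\Phi(\mu)(z_0)|\rho_{\mathbb{D}^*}^{-2}(z_0) \leq C\,\Vert\mu|_{B_\rho(z_0^*,R)}\Vert_\infty + C e^{-\alpha R},
\]
where $B_\rho(z_0^*,R)$ is the hyperbolic $R$-disk in $\mathbb{D}$ about $z_0^*$. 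Given $\varepsilon>0$, pick $R$ with $Ce^{-\alpha R}<\varepsilon/2$, then $t>0$ with $\Vert\mu|_{\mathbb{D}\setminus\bigcup_i D_t^{\xi_i}}\Vert_\infty<\varepsilon/(2C)$, and set $s:=te^{-R}$. For any $z_0\in\mathbb{D}^*\setminus\bigcup_i(D_s^{\xi_i})^*$, the reflection $z_0^*$ lies outside every $D_s^{\xi_i}$; sending $D_s^{\xi_i}$ to the horoball $H_s\subset\mathbb{U}$ via $\phi_{\xi_i}^{-1}$, the corresponding point in $\mathbb{U}$ has imaginary part strictly less than $s=te^{-R}$, so its hyperbolic $R$-ball extends vertically no further than $se^R=t$ and thus misses $H_t$, hence $D_t^{\xi_i}$. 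This holds for every $i$, so $B_\rho(z_0^*,R)\subset\mathbb{D}\setminus\bigcup_i D_t^{\xi_i}$ and $\Vert\mu|_{B_\rho(z_0^*,R)}\Vert_\infty<\varepsilon/(2C)$; the localization estimate then gives $|\Phi(\mu)(z_0)|\rho_{\mathbb{D}^*}^{-2}(z_0)<\varepsilon$. Taking the supremum in $z_0$ proves $\Phi(\mu)\in B_*^X(\mathbb{D}^*)$.

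The main obstacle is the localization estimate itself. It is a standard consequence of Koebe distortion for the univalent function $f_\mu|_{\mathbb{D}^*}$ combined with the exponential decay of the Bers reproducing kernel $(1-z\bar\zeta)^{-4}$ in hyperbolic distance, and is essentially implicit in the half-plane analysis of \cite{HWS}, with universal constants. Because the estimate depends on $\mu$ only through its norm on a single hyperbolic ball, the same bound applies uniformly in the multi-point setting, and only the hyperbolic bookkeeping for several horoballs (carried out above) is new.
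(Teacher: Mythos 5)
Your argument is correct, and its engine is the same as the paper's, though the bookkeeping is organized differently. The paper starts from the Astala--Zinsmeister integral representation
$\rho_{\mathbb{D}^*}^{-4}(\zeta^*)|\Phi(\mu)(\zeta^*)|^2 \leq C\int_{\mathbb{D}}|\gamma_{\zeta}'(z)|^2|\mu(z)|^2\,dx\,dy$
and splits the integral directly over $\mathbb{D}\setminus\bigcup_i D_t^{\xi_i}$ versus the horoballs, estimating ${\rm Area}(\gamma_\zeta(D_t^{\xi_i}))\asymp e^{-2d_H(\zeta,D_t^{\xi_i})}$ and using $N_a(D_t^{\xi})=D_{e^{-a}t}^{\xi}$ to locate the points $\zeta^*$ where this area is small --- which is precisely your ``hyperbolic bookkeeping'' step in a different order. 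Your route instead factors everything through the localization estimate $|\Phi(\mu)(z_0)|\rho_{\mathbb{D}^*}^{-2}(z_0)\leq C\Vert\mu|_{B_\rho(z_0^*,R)}\Vert_\infty+Ce^{-\alpha R}$, which you assert rather than prove; it is true, but the clean derivation is again the Astala--Zinsmeister inequality (split the integral over $B_\rho(z_0^*,R)$ and its complement; the complement contributes ${\rm Area}(\mathbb{D}\setminus\gamma_{z_0^*}(B_\rho(z_0^*,R)))\asymp e^{-cR}$), not really Koebe distortion, and you should say so explicitly since that inequality is the only nontrivial analytic input in either proof. Your packaging buys a reusable, $X$-independent lemma (the theorem then reduces to pure horoball geometry, and would extend verbatim to more general boundary sets), at the cost of leaving the key estimate as a black box; the paper's inline version is self-contained but tied to the specific shape of the exceptional region. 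One cosmetic point: your single-point paragraph via Hu--Wu--Shen and the cocycle identity is correct but redundant, since the multi-point argument already covers $|X|=1$. Also mind the normalization of the hyperbolic density ($\rho_{\mathbb{L}}(z)=(2\,{\rm Im}\,z)^{-1}$ in the paper), which changes $se^{R}$ to $se^{2R}$ in your vertical-extent computation; this only shifts the choice of $s$ and does not affect the conclusion.
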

\begin{proof}
By the integral representation of the Schwarzian derivative, which was established by Astala and Zinsmeister \cite{AZ}
(see also Cui \cite{Cu}), we have that for $\zeta^* \in \mathbb{D}^*$, 
$$
\rho_{\mathbb{D}^*}^{-4}(\zeta^*)|\Phi(\mu)(\zeta^*)|^2 \leqslant C\int_{\mathbb{D}}\frac{(|\zeta^*|^2-1)^2}
{|z-\zeta^*|^4}|\mu(z)|^2dxdy, 
$$
where $C>0$ is a constant depending only on 
$\Vert\mu\Vert_{\infty}.$ 

Let $\gamma_{\zeta}(z)=(\overline{\zeta^*}z-1)/(z-\zeta^*) \in \mbox{\rm M\"ob}(\mathbb{D})$ 
be a M\"obius transformation of $\mathbb{D}$ onto itself that sends $\zeta$ to $0$. 
Here, $\zeta \in \mathbb{D}$ and $\zeta^* \in \mathbb{D}^*$ are the reflection to each other with respect to $\mathbb{S}$. 
We see that $|\gamma_{\zeta}'(z)|^2=(|\zeta^*|^2-1)^2/|z-\zeta^*|^4$. It follows that 
\begin{equation*}
 \begin{split}
 &\quad \int_{\mathbb{D}}\frac{(|\zeta^*|^2-1)^2}
{|z-\zeta^*|^4}|\mu(z)|^2dxdy  =   \int_{\mathbb{D}}|\gamma_{\zeta}'(z)|^2|\mu(z)|^2dxdy\\
&=\int_{\mathbb{D} \setminus \bigcup_{i=1}^{n}D_{t}^{\xi_i}}|\gamma_{\zeta}'(z)|^2|\mu(z)|^2dxdy + \sum_{i=1}^{n}\int_{D_{t}^{\xi_i}}|\gamma_{\zeta}'(z)|^2|\mu(z)|^2dxdy.\\
 \end{split}   
\end{equation*}
Here, for a given $\varepsilon > 0$, we choose $t>0$
so that $\Vert \mu|_{\mathbb{D} \setminus \bigcup_{i=1}^{n}D_{t}^{\xi_i}} \Vert_{\infty} < \varepsilon$
under the condition $\mu \in M_*^X(\mathbb{D})$.
Then, the last formula is estimated from above by 
\begin{equation*}
 \begin{split}
 &\quad\ \varepsilon^2 \int_{\mathbb{D} \setminus \bigcup_{i=1}^{n}D_{t}^{\xi_i}}|\gamma_{\zeta}'(z)|^2dxdy + \sum_{i=1}^{n}\int_{D_{t}^{\xi_i}}|\gamma_{\zeta}'(z)|^2dxdy \\
 &\leqslant\pi\varepsilon^2 + \sum_{i=1}^{n}{\rm Area}(\gamma_{\zeta}(D_{t}^{\xi_i})),\\
 \end{split}   
\end{equation*}
where $\rm Area$ stands for the Euclidean area. 

We consider ${\rm Area}(\gamma_{\zeta}(D_{t}^{\xi_i}))$.
The notation $\asymp$ is used below when the both sides are 
comparable, i.e., one side is bounded from above and below by multiples of the other side
with some positive absolute constants.
By ${\rm Area}(\gamma_{\zeta}(D_t^{\xi_i}))\asymp {\rm diam^2}(\gamma_{\zeta}(D_t^{\xi_i}))$ and 
$d_H(0,z)=\log\frac{1+|z|}{1-|z|}$ $(z \in \mathbb{D})$ for the Euclidean diameter $\rm diam$ and the hyperbolic distance $d_H$,
we see that 
\begin{equation*}
 {\rm diam^2}(\gamma_{\zeta^{*}}(D_t^{\xi_i}))
 \asymp e^{-2d_H(0, \gamma_{\zeta}(D_{t}^{\xi_i}))} = e^{-2d_H(\zeta, D_{t}^{\xi_i})}.
\end{equation*}
Therefore, the condition ${\rm Area}(\gamma_{\zeta}(D_{t}^{\xi_i})) \leqslant \varepsilon^2$ is equivalent to that $d_H(\zeta, D_{t}^{\xi_i}) \geqslant -\log \varepsilon$ up to some multiple constant. 
We note that $d_H(\zeta, D_t^{\xi_i})=d_H(\zeta^*, (D_t^{\xi_i})^*)$ by reflection, and that the hyperbolic 
$a$-neighborhood $N_a(D_{t}^{\xi_i})$ of $D_{t}^{\xi_i}$ is 
$$
N_a(D_{t}^{\xi_i}) = h_{\xi_i}(N_a(H_{t}))=h_{\xi_i}(H_{e^{-a}t}) = D_{e^{-a}t}^{\xi_i}.
$$
Thus, 
the condition $d_H(\zeta, D_{t}^{\xi_i}) \geqslant - \log \varepsilon$ is equivalent to 
that $\zeta^* \notin (D_{\varepsilon t}^{\xi_i})^*$.
This implies that if $\zeta^* \in \mathbb{D}^* \setminus \bigcup_{i=1}^{n}(D_{\varepsilon t}^{\xi_i})^*$, then 
${\rm Area}(\gamma_{\zeta}(D_{t}^{\xi_i}))\leqslant A\varepsilon^2$ for some absolute constant $A$.

We plug this area estimate in the above inequality.
The conclusion is that 
if $\zeta^* \in \mathbb{D}^* \setminus \bigcup_{i=1}^{n}(D_{\varepsilon t}^{\xi_i})^*$, then 
$$
\rho_{\mathbb{D}^*}^{-2}(\zeta^*)|\Phi(\mu)(\zeta^*)| \leqslant \sqrt{C(\pi+An)}\,\varepsilon.
$$
Since $\varepsilon>0$ is arbitrarily chosen, this implies that if $\mu \in M_*^X(\mathbb{D})$, then
$\Phi(\mu) \in B_*^X(\mathbb{D}^*)$.
 \end{proof}
 
We note that $\Phi:M_*^X(\mathbb{D}) \to B_*^X(\mathbb{D}^*)$ is holomorphic because
$\Phi:M(\mathbb{D}) \to B(\mathbb{D}^*)$ is holomorphic and the closed subspaces 
$M_*^X(\mathbb{D})$ and $B_*^X(\mathbb{D}^*)$ are endowed with
the relative topologies from $M(\mathbb{D})$ and $B(\mathbb{D}^*)$, respectively.

%%%%%%%%%%%%%%%%%%%%%%%%%%%%%%%
 
\section{Barycentric extension}

In this section, we will prove that the barycentric extension due to Douady and Earle \cite{DE}
gives an appropriate right inverse of $\pi:M_{*}^{X}(\mathbb{D}) \to T_*^X$
from the generalized symmetric Teich\-m\"ul\-ler space $T_*^X$ to
the space $M_{*}^{X}(\mathbb{D})$ of complex dilatations. In other words, for the section $s:T \to M(\mathbb{D})$ of the universal Teich\-m\"ul\-ler space induced by the barycentric extension,
we will show that the image $s(T_*^X)$ is in $M_{*}^{X}(\mathbb{D})$.

This claim follows from the following more general result concerning the section $s$.
This was originally proved by Earle, Markovic, and Saric \cite[Theorem 4]{EMS} for the little universal  Teich\-m\"ul\-ler space 
$T_0=\mbox{\rm M\"ob}(\mathbb{S}) \backslash {\rm Sym}$ and for
the subspaces
$M_0(\mathbb{D}) \subset M(\mathbb{D})$ and $B_0(\mathbb{D}^*) \subset B(\mathbb{D}^*)$
consisting of the vanishing elements at the boundary.
The proof below is a modification of theirs.

\begin{theorem}\label{EMS}
Let $\mu$ and $\nu$ be in $M(\mathbb{D})$. 
Let $X$ be a finite subset of $\mathbb{S}$.
Then, the following are equivalent:
\begin{enumerate}
\item
$s([\mu]) - s([\nu])\in L_*^X(\mathbb{D})$;
\item
$\Phi(\mu)-\Phi(\nu)\in B_*^X(\mathbb{D}^*)$.
\end{enumerate}
\end{theorem}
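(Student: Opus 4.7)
The strategy is to adapt the Earle--Markovic--Saric argument from the setting of ``vanishing along the entire boundary'' (little spaces $M_0$, $B_0$) to the present one, where smallness is required only outside horoballs tangent at the finitely many points of $X$. Since $\Phi$ and $s$ factor through the equivalence classes, I may replace $\mu$ and $\nu$ throughout by $\mu_0 = s([\mu])$ and $\nu_0 = s([\nu])$, so that both Beltrami coefficients arise from barycentric extensions; this reduction will be essential for the hard direction, where the specific form of the extension is used.

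For the direction $(1) \Rightarrow (2)$, my plan is to follow the template of the proof of Theorem~\ref{Bers} applied to the difference $\mu_0 - \nu_0$ in place of a single Beltrami coefficient. The key ingredient is a difference version of the Astala--Zinsmeister pointwise estimate
$$
\rho_{\mathbb{D}^*}^{-4}(\zeta^*)|\Phi(\mu_0)(\zeta^*) - \Phi(\nu_0)(\zeta^*)|^2 \leqslant C \int_{\mathbb{D}} \frac{(|\zeta^*|^2 - 1)^2}{|z - \zeta^*|^4}|\mu_0(z) - \nu_0(z)|^2\,dxdy,
$$
with $C = C(\|\mu_0\|_\infty,\|\nu_0\|_\infty)$, which I would obtain by integrating the Fr\'echet derivative of $\Phi$ along the segment $t\mapsto \nu_0+t(\mu_0-\nu_0)$ and applying Koebe distortion to the singular kernel $(\partial f_{\mu_t})^2/(f_{\mu_t}(z)-f_{\mu_t}(\zeta^*))^4$. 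Once this is established, the rest copies Theorem~\ref{Bers} verbatim: given $\varepsilon>0$, pick $t$ so that $\|(\mu_0-\nu_0)|_{\mathbb{D}\setminus\bigcup_i D_t^{\xi_i}}\|_\infty<\varepsilon$, split the integral over the ``good'' region and the horoballs, invoke the area estimate ${\rm Area}(\gamma_{\zeta}(D_t^{\xi_i}))\leqslant A\varepsilon^2$ valid for $\zeta^* \in \mathbb{D}^*\setminus\bigcup_i (D_{\varepsilon t}^{\xi_i})^*$, and conclude that $\Phi(\mu)-\Phi(\nu) \in B_*^X(\mathbb{D}^*)$.

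For the harder direction $(2) \Rightarrow (1)$, the plan is to establish a \emph{local} estimate: there exist a hyperbolic radius $r > 0$ and a modulus $\omega$ with $\omega(0^+) = 0$, depending only on an upper bound for $\|\mu_0\|_\infty$ and $\|\nu_0\|_\infty$, such that for every $w \in \mathbb{D}$,
$$
|\mu_0(w) - \nu_0(w)| \leqslant \omega\!\left(\,\sup_{\zeta^* \in B_H(w^*, r)} \rho_{\mathbb{D}^*}^{-2}(\zeta^*)|\Phi(\mu)(\zeta^*) - \Phi(\nu)(\zeta^*)|\,\right),
$$
where $w^*$ denotes the reflection of $w$ across $\mathbb{S}$ and $B_H$ a hyperbolic ball in $\mathbb{D}^*$. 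Granted this, the deduction is clean: for $\delta>0$, choose $t > 0$ so that the supremum of $\rho_{\mathbb{D}^*}^{-2}|\Phi(\mu)-\Phi(\nu)|$ over $\mathbb{D}^*\setminus\bigcup_i(D_t^{\xi_i})^*$ is $<\delta$; by the nested-horoball identity $N_r(D_t^{\xi_i}) = D_{e^{-r}t}^{\xi_i}$ used at the end of Theorem~\ref{Bers}, for $w \in \mathbb{D}\setminus\bigcup_i D_{e^r t}^{\xi_i}$ the reflected ball $B_H(w^*,r)$ lies in $\mathbb{D}^*\setminus\bigcup_i(D_t^{\xi_i})^*$, whence $|\mu_0(w)-\nu_0(w)|\leqslant\omega(\delta)$, proving $\mu_0-\nu_0 \in L_*^X(\mathbb{D})$. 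To prove the local estimate itself I would conjugate $w$ to $0$ by a M\"obius transformation (using conformal naturality of $E$), write $\mu_0(0)-\nu_0(0)$ as a boundary functional of $f^\mu\circ (f^\nu)^{-1}|_{\mathbb{S}}$ by implicit differentiation of the defining equation $\int_{\mathbb{S}}\frac{h(\zeta)-u}{1-\bar u\, h(\zeta)}\,d\zeta = 0$ for $E(h)(0)$, and finally bound this functional by the Schwarzian differences of $f_\mu$ and $f_\nu$ on a fixed hyperbolic neighbourhood of $\infty \in \mathbb{D}^*$ through Koebe-type distortion estimates for the univalent functions $f_\mu|_{\mathbb{D}^*}$ and $f_\nu|_{\mathbb{D}^*}$.

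The main obstacle is precisely this local estimate. The issue is not just to prove such an inequality in some form, but to verify --- by careful quantitative tracking inside the EMS argument --- that the radius $r$ and the modulus $\omega$ can be chosen as \emph{universal} constants, independent of $w$ and of $X$. Without this uniformity, the passage from horoballs $(D_t^{\xi_i})^*$ in $\mathbb{D}^*$ to corresponding horoballs in $\mathbb{D}$ would not be controlled simultaneously at all tangency points of $X$, and the conclusion $\mu_0 - \nu_0 \in L_*^X(\mathbb{D})$ would not follow from pointwise information on the Schwarzian.
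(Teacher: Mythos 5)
Your reduction to $\mu_0=s([\mu])$, $\nu_0=s([\nu])$ is fine, and your route for $(1)\Rightarrow(2)$ --- a difference version of the Astala--Zinsmeister inequality obtained by integrating $d_{\mu_t}\Phi[\mu_0-\nu_0]$ along the segment $\mu_t=\nu_0+t(\mu_0-\nu_0)$, followed by the area/horoball splitting of Theorem \ref{Bers} --- is a legitimate quantitative alternative to what the paper does (the paper instead runs a normal-families argument with the conjugates $\mu_k=g_k^*s([\mu])$, $\nu_k=g_k^*s([\nu])$ and evaluates $\widetilde\Phi(\mu_k)-\widetilde\Phi(\nu_k)$ at $\infty$). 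That difference estimate is not in the paper and you would have to prove it, but the plan is sound and the constant depends only on $\max(\Vert\mu_0\Vert_\infty,\Vert\nu_0\Vert_\infty)$ since the segment stays in a smaller ball.

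The genuine gap is in $(2)\Rightarrow(1)$. Everything there hinges on the local estimate $|\mu_0(w)-\nu_0(w)|\leq\omega\bigl(\sup_{B_H(w^*,r)}\rho_{\mathbb{D}^*}^{-2}|\Phi(\mu)-\Phi(\nu)|\bigr)$ with universal $r$ and $\omega$, which you correctly identify as the main obstacle but do not prove; the sketch you give for it cannot work as stated. The Douady--Earle dilatation at $0$ is a functional of the boundary homeomorphism on \emph{all} of $\mathbb{S}$, whereas $\rho_{\mathbb{D}^*}^{-2}|\Phi(\mu)-\Phi(\nu)|$ being small on a fixed hyperbolic ball about $\infty$ only controls how close $f_\nu\circ f_\mu^{-1}$ is to a M\"obius map on the image of that ball; it says nothing directly about the two boundary correspondences elsewhere, so no amount of Koebe distortion on a fixed neighbourhood of $\infty$ will bound the averaging integral defining $E(h)(0)$. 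The estimate is in fact true, but the only accessible proof is soft: assume it fails along a sequence, use uniform bounds on the dilatations to extract uniformly convergent subsequences of the normalized quasiconformal maps, apply \cite[Lemma 6.1]{EMS} to pass to the limit in both $\Phi$ and $s$, and invoke the identity theorem to upgrade ``$\Phi(\mu_\infty)=\Phi(\nu_\infty)$ on a ball'' to equality on all of $\mathbb{D}^*$, whence $s([\mu_\infty])=s([\nu_\infty])$ --- a contradiction. This is exactly the compactness mechanism the paper uses directly (applied to the conjugates $g_k^*s([\mu])$, $g_k^*s([\nu])$ along points $z_k\in\mathbb{D}\setminus\bigcup_i D_{1/k}^{\xi_i}$, using the conformal naturality $s([g_k^*\mu])=g_k^*s([\mu])$), bypassing any quantitative local inequality. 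So either you must supply the compactness argument after all (in which case the local estimate is an unnecessary detour), or you are left with an unproven, genuinely hard quantitative lemma at the heart of the implication.
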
 
\begin{proof}
$(2) \Rightarrow (1)$: We take an arbitrary sequence $\{z_k\}_{k \in \mathbb {N}} \subset \mathbb{D}$ such that
$z_k \in \mathbb{D} \setminus \bigcup_{i=1}^n D_{1/k}^{\xi_i}$ for every $k \in \mathbb {N}$.
For each $k$, we choose a M\"obius transformation $g_k \in \mbox{\rm M\"ob}(\mathbb{D})$ with $g_k(0)=z_k$,
and define $\mu_k=g_k^* s([\mu])$ and $\nu_k=g_k^* s([\nu])$. Then, $\Phi(\mu_k)=g_k^* \Phi(\mu)$ and 
$\Phi(\nu_k)=g_k^* \Phi(\nu)$ for $g_k \in \mbox{\rm M\"ob}(\mathbb{D}^*)$. 
We also see that for every $\tilde k \in \mathbb N$ and for every $z^* \in \mathbb {D}^*$, there is some $k_0$ such that 
$g_k(z^*) \in {\mathbb D}^* \setminus \bigcup_{i=1}^n (D_{1/\tilde k}^{\xi_i})^*$ for all $k \geq k_0$. 
Since we assume that $\Phi(\mu)-\Phi(\nu) \in B_*^X(\mathbb{D}^*)$, we have that
$$
\rho_{\mathbb{D}^*}^{-2}(z^*)|\Phi(\mu_k)(z^*)-\Phi(\nu_k)(z^*)|
=\rho_{\mathbb{D}^*}^{-2}(g_k(z^*))|(\Phi(\mu)-\Phi(\nu))(g_k(z^*))| 
$$
tends to $0$ as $k \to \infty$ for each $z^* \in \mathbb{D}^*$.
In particular, $\Phi(\mu_k)-\Phi(\nu_k) \to 0$ as $k \to \infty$.

Since $\Vert \mu_k \Vert_\infty=\Vert s([\mu]) \Vert_\infty$ and $\Vert \nu_k \Vert_\infty=\Vert s([\nu]) \Vert_\infty$,
by passing to a subsequence, we may assume that
$f^{\mu_{k}}$ converges uniformly to some 
quasiconformal homeomorphism $f^{\mu_0} \in {\rm QC}({\mathbb D})$ with a complex dilatation $\mu_0 \in M({\mathbb D})$ and $f^{\nu_{k}}$ 
converges uniformly to some 
$f^{\nu_0} \in {\rm QC}({\mathbb D})$ with $\nu_0 \in M({\mathbb D})$. In this situation,
\cite[Lemma 6.1]{EMS} asserts that 
$\Phi(\mu_{k})$ converges locally uniformly to
$\Phi(\mu_0)$ and $\Phi(\nu_{k})$ converges locally uniformly to
$\Phi(\nu_0)$ on $\mathbb{D}^*$.  
Since $\Phi(\mu_k)-\Phi(\nu_k) \to 0$ as $k \to \infty$, this implies that $\Phi(\mu_0)=\Phi(\nu_0)$.

By \cite[Lemma 6.1]{EMS} again, we see that $s([\mu_{k}])$ converges locally uniformly to
$s([\mu_0])$ and $s([\nu_{k}])$ converges locally uniformly to
$s([\nu_0])$ on $\mathbb{D}$. Here, $\Phi(\mu_0)=\Phi(\nu_0)$ implies that $s([\mu_0])=s([\nu_0])$.
Therefore, $s([\mu_{k}])-s([\nu_{k}])$ converges to $0$, and in particular,
$s([\mu_{k}])(0)-s([\nu_{k}])(0) \to 0$ as $k \to \infty$.

The conformal naturality of the barycentric extension implies that
$$
s([\mu_{k}])=s([g_k^*\mu])=g_k^*(s([\mu])); \quad s([\nu_{k}])=s([g_k^*\nu])=g_k^*(s([\nu])).
$$
It follows that
$$
|s([\mu])(z_k)-s([\nu])(z_k)|=|s([\mu_k])(0)-s([\nu_k])(0)| \to 0 \quad (k \to \infty).
$$
Since $z_k \in \mathbb{D} \setminus \bigcup_{i=1}^n D_{1/k}^{\xi_i}$ for every $k \in \mathbb {N}$,
we see that $s([\mu])-s([\nu]) \in L_*^X(\mathbb{D})$.

$(1) \Rightarrow (2)$:  
We take an arbitrary sequence $\{z_k\}_{k \in \mathbb {N}} \subset \mathbb{D}$ such that
$z_k \in \mathbb{D} \setminus \bigcup_{i=1}^n D_{1/k}^{\xi_i}$ for every $k \in \mathbb {N}$.
For each $k$, we choose a M\"obius transformation $g_k \in \mbox{\rm M\"ob}(\mathbb{D})$ with $g_k(0)=z_k$,
and define $\mu_k=g_k^* s([\mu])$ and $\nu_k=g_k^* s([\nu])$. Then, 
$$
\Vert (\mu_k-\nu_k)|_{\Delta(0,r)} \Vert_\infty=\Vert (s([\mu])- s([\nu]))|_{\Delta(z_k,r)} \Vert_\infty
$$
tends to $0$ as $k \to \infty$ for any $r>0$. Here, $\Delta(a,r) \subset \mathbb{D}$ denotes a hyperbolic disk
with center $a$ and radius $r$.

Since $\Vert \mu_k \Vert_\infty=\Vert s([\mu]) \Vert_\infty$ and $\Vert \nu_k \Vert_\infty=\Vert s([\nu]) \Vert_\infty$,
by passing to a subsequence, we may assume that  
$f^{\mu_{k}}$ converges uniformly to some 
quasiconformal homeomorphism $f^{\mu_0} \in {\rm QC}({\mathbb D})$ with a complex dilatation $\mu_0 \in M({\mathbb D})$ and $f^{\nu_{k}}$ 
converges uniformly to some 
$f^{\nu_0} \in {\rm QC}({\mathbb D})$ with $\nu_0 \in M({\mathbb D})$. Let $\lambda_k=\mu_k \ast \nu_k^{-1}$, that is,
$\lambda_k$ is the complex dilatation of $f^{\mu_k} \circ (f^{\nu_k})^{-1}$. This satisfies
$$
|\lambda_k \circ f^{\nu_k}|=\frac{|\mu_k -\nu_k|}{|1-\overline{\nu_k} \mu_k|}.
$$

For an arbitrary compact subset $E \subset \mathbb{D}$, we take $r>0$ such that $(f^{\nu_0})^{-1}(E) \subset \Delta(0,r)$.
Since $(f^{\nu_{k}})^{-1}$ converges to $(f^{\nu_0})^{-1}$ uniformly on $\mathbb{D}$ as $k \to \infty$, we can assume that
$(f^{\nu_{k}})^{-1}(E) \subset \Delta(0,r)$ for all sufficiently large $k$. Hence,
$$
\Vert \lambda_{k}|_E \Vert_\infty \leq 
\frac{\Vert (\mu_{k}-\nu_{k})|_{\Delta(0,r)}\Vert_\infty}{1-\Vert \mu \Vert_\infty \Vert \nu \Vert_\infty}
\to 0 \quad (k \to \infty).
$$
Since $E$ is arbitrary, we see from this estimate that the limit $f^{\mu_0} \circ (f^{\nu_0})^{-1}$ of
$f^{\mu_{k}} \circ (f^{\nu_{k}})^{-1}$ is conformal on $\mathbb{D}$. In fact, $f^{\mu_0} \circ (f^{\nu_0})^{-1}$ is the identity
by the normalization. Therefore, $f^{\mu_0}= f^{\nu_0}$, and both $f^{\mu_{k}}$ and $f^{\nu_{k}}$
converge uniformly to the same limit $f^{\mu_0}$ as $k \to \infty$.

For every $\mu \in M(\mathbb D)$, we define $\widetilde \Phi(\mu)(z)=z^4\Phi(\mu)(z)$ ($z \in \mathbb D^*$).
As $\rho_{\mathbb D^*}^{-2}(z)|\Phi(\mu)(z)|$ is bounded, we see that $\widetilde \Phi(\mu)$ is a holomorphic function on $\mathbb D^*$.
Similarly to \cite[Lemma 6.1]{EMS}, it can be proved that 
$\widetilde \Phi(\mu_{k})$ and $\widetilde \Phi(\nu_{k})$ converge to the same limit $\widetilde \Phi(\mu_0)$
locally uniformly on $\mathbb{D}^*$ as $k \to \infty$. Therefore, $\widetilde \Phi(\mu_{k})-\widetilde \Phi(\nu_{k})$ 
converges to $0$, and in particular,
$\widetilde \Phi(\mu_{k})(\infty)-\widetilde \Phi(\nu_{k})(\infty) \to 0$ as $k \to \infty$.

The equivariance of the Bers projection implies that 
$$
\Phi(\mu_k)=\Phi(g_k^*\mu)=g_k^* \Phi(\mu);\quad \Phi(\nu_k)=\Phi(g_k^*\nu)=g_k^* \Phi(\nu).
$$
By $\lim_{z \to \infty}g_k(z)=z_k^*$ and $\lim_{z \to \infty}|z^2g'_k(z)|=\rho_{\mathbb{D}^*}^{-1}(z_k^*)$,
it follows that
\begin{align*}
\rho_{\mathbb{D}^*}^{-2}(z_k^*)|\Phi(\mu)(z_k^*)-\Phi(\nu)(z_k^*)|
&=\lim_{z \to \infty}|z^2g'_k(z)|^2|\Phi(\mu)(g_k(z))-\Phi(\nu)(g_k(z))|\\
&=|\widetilde\Phi(\mu_k)(\infty)-\widetilde\Phi(\nu_k)(\infty)|.
\end{align*}
This tends to $0$ as $k \to \infty$.
Since $z_k^* \in \mathbb{D}^* \setminus \bigcup_{i=1}^n (D_{1/k}^{\xi_i})^*$ are arbitrarily chosen,
we see that $\Phi(\mu)-\Phi(\nu) \in B_*^X(\mathbb{D})$.
\end{proof}

Here are direct consequences from this theorem.

\begin{corollary}\label{DE}
For every $h\in \rm QS_{*}^{X}$, the complex dilatation of the barycentric extension $E(h)$ is in $M_{*}^{X}(\mathbb{D})$.
Hence, we have a global continuous section $s:T_*^X \to M_{*}^{X}(\mathbb{D})$ to the 
Teich\-m\"ul\-ler projection $\pi:M_{*}^{X}(\mathbb{D}) \to T_*^X$.
\end{corollary}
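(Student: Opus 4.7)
The plan is to deduce both statements directly from Theorem \ref{EMS} by specializing to the reference point $\nu\equiv 0$. Given $h\in {\rm QS}_*^X$, I would first pick $\mu\in M_*^X(\mathbb{D})$ with $f^\mu|_{\mathbb{S}}=h$ (after M\"obius normalization), which exists by the very definition of ${\rm QS}_*^X$. By the definition of the Douady--Earle section $s$, the complex dilatation of $E(h)$ is precisely $s([\mu])$, so the first assertion of the corollary amounts to showing $s([\mu])\in M_*^X(\mathbb{D})$.

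Next, I would apply Theorem \ref{Bers} to obtain $\Phi(\mu)\in B_*^X(\mathbb{D}^*)$, and take $\nu\equiv 0$ as the reference Beltrami coefficient. Since $f^0={\rm id}$ and the barycentric extension of the identity is the identity, we have $s([0])=0$ and $\Phi(0)=0$, so $\Phi(\mu)-\Phi(0)=\Phi(\mu)\in B_*^X(\mathbb{D}^*)$. The implication $(2)\Rightarrow (1)$ of Theorem \ref{EMS} then yields $s([\mu])-s([0])=s([\mu])\in L_*^X(\mathbb{D})$. Since $s([\mu])$ is automatically a Beltrami coefficient in $M(\mathbb{D})$, it lies in $L_*^X(\mathbb{D})\cap M(\mathbb{D})=M_*^X(\mathbb{D})$, which proves the first claim.

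For the global continuous section statement, I would simply observe that $s:T\to M(\mathbb{D})$ is already a continuous (in fact real analytic) section of the Teich\-m\"ul\-ler projection $\pi$ in the universal setting, and equip $T_*^X=\pi(M_*^X(\mathbb{D}))\subset T$ and $M_*^X(\mathbb{D})\subset M(\mathbb{D})$ with the respective relative topologies. The first part guarantees $s(T_*^X)\subset M_*^X(\mathbb{D})$, while $\pi\circ s={\rm id}_T$ restricts to $\pi\circ s={\rm id}_{T_*^X}$, and continuity of the restriction is automatic from the subspace topology.

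Because Theorem \ref{EMS} already encodes all of the essential analysis, I do not expect any substantial obstacle in this corollary. The only subtlety is checking that the reference point $\nu=0$ satisfies both $s([0])=0$ and $\Phi(0)=0$, which is immediate from the conformal naturality of $E$ and from the definition of $\Phi$ via the Schwarzian of $f_0={\rm id}$.
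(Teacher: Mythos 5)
Your proposal is correct and follows essentially the same route as the paper: apply Theorem \ref{Bers} to get $\Phi(\mu)\in B_*^X(\mathbb{D}^*)$, then specialize Theorem \ref{EMS} to $\nu=0$ to conclude $s([\mu])\in M_*^X(\mathbb{D})$. Your extra checks that $s([0])=0$ and $\Phi(0)=0$ and the remark on the relative topologies are fine and only make explicit what the paper leaves implicit.
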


\begin{proof}
By setting $\nu=0$ in Theorem \ref{EMS}, we obtain that 
$s([\mu]) \in M_{*}^{X}(\mathbb{D})$ is equivalent to that $\Phi(\mu) \in B_*^X(\mathbb{D}^*)$.
Let $\mu \in M_{*}^{X}(\mathbb{D})$ be the complex dilatation of some quasiconformal extension of $h$.
Then, the complex dilatation of the barycentric extension of $h$ is $s([\mu])$.
Since $\Phi(\mu) \in B_*^X(\mathbb{D}^*)$ by Theorem \ref{Bers}, we see that $s([\mu]) \in M_{*}^{X}(\mathbb{D})$.
\end{proof}

\begin{corollary}\label{contractible} 
The Teich\-m\"ul\-ler space $T_*^X$ is contractible.
\end{corollary}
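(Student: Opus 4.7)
The plan is to exhibit an explicit strong deformation retraction of $T_*^X$ onto the basepoint $[0]$ using the global continuous section $s:T_*^X \to M_*^X(\mathbb{D})$ provided by Corollary \ref{DE}. Define
$$
H:[0,1] \times T_*^X \to T_*^X, \qquad H(t,[\mu])=\pi(t\cdot s([\mu])).
$$
Then $H(1,[\mu])=\pi(s([\mu]))=[\mu]$ and $H(0,[\mu])=\pi(0)=[0]$, so once $H$ is shown to be well defined and continuous we are done.

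For well-definedness, I need $t\cdot s([\mu]) \in M_*^X(\mathbb{D})$ for every $t\in[0,1]$ and every $[\mu]\in T_*^X$. First, $L_*^X(\mathbb{D})$ is a linear subspace of $L^\infty(\mathbb{D})$, so it is closed under scalar multiplication; hence $t\cdot s([\mu]) \in L_*^X(\mathbb{D})$ since $s([\mu]) \in L_*^X(\mathbb{D})$ by Corollary \ref{DE}. Second, $\|t\cdot s([\mu])\|_\infty \leq \|s([\mu])\|_\infty<1$, so $t\cdot s([\mu]) \in M(\mathbb{D})$. Combining these, $t\cdot s([\mu]) \in M_*^X(\mathbb{D})=L_*^X(\mathbb{D})\cap M(\mathbb{D})$, so $\pi(t\cdot s([\mu]))$ lies in $T_*^X$.

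For continuity of $H$, note that scalar multiplication $(t,\nu)\mapsto t\nu$ is continuous from $[0,1]\times L^\infty(\mathbb{D})$ to $L^\infty(\mathbb{D})$, the section $s$ is continuous (Corollary \ref{DE}), and the Teich\-m\"ul\-ler projection $\pi:M_*^X(\mathbb{D}) \to T_*^X$ is continuous by definition of the quotient topology. Hence $H$ is the composition of continuous maps and is continuous.

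There is really no substantive obstacle here: the only thing to check is that the straight-line homotopy $t\mapsto t\cdot s([\mu])$ stays inside $M_*^X(\mathbb{D})$, and this follows immediately because $L_*^X(\mathbb{D})$ is a vector subspace whose intersection with $M(\mathbb{D})$ is star-shaped about $0$. Thus $T_*^X$ is contractible via a homotopy to the basepoint $[0]$.
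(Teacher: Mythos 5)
Your proof is correct and is essentially the paper's argument made explicit: the paper simply notes that $M_*^X(\mathbb{D})$ is contractible (via the same straight-line homotopy to $0$, which stays in $M_*^X(\mathbb{D})$ because $L_*^X(\mathbb{D})$ is a subspace) and pushes this down to $T_*^X$ by $\pi$ using the global continuous section $s$ from Corollary \ref{DE}. Nothing to add.
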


\begin{proof}
Since $M_{*}^{X}(\mathbb{D})$ is contractible, the assertion follows from Corollary \ref{DE}.
\end{proof}

\begin{corollary}\label{image}
$\beta(T_*^X) = \beta(T)\cap B_*^X(\mathbb{D}^*)$.
\end{corollary}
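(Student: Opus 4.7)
The plan is to chase the identification $\beta \circ \pi = \Phi$ through the definitions and then invoke Theorem \ref{EMS} (with $\nu = 0$) to upgrade membership in $B_*^X(\mathbb{D}^*)$ to membership in $M_*^X(\mathbb{D})$ via the section $s$.

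First I would dispose of the inclusion $\beta(T_*^X) \subset \beta(T) \cap B_*^X(\mathbb{D}^*)$, which is immediate. By definition $T_*^X = \pi(M_*^X(\mathbb{D})) \subset T$, so $\beta(T_*^X) \subset \beta(T)$. Moreover, $\beta \circ \pi = \Phi$ gives $\beta(T_*^X) = \Phi(M_*^X(\mathbb{D}))$, and Theorem \ref{Bers} states exactly that $\Phi(M_*^X(\mathbb{D})) \subset B_*^X(\mathbb{D}^*)$. This settles one direction with no real work.

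For the reverse inclusion $\beta(T) \cap B_*^X(\mathbb{D}^*) \subset \beta(T_*^X)$, I would take $\varphi$ in the intersection and write $\varphi = \beta([\mu]) = \Phi(\mu)$ for some $\mu \in M(\mathbb{D})$. The goal is to replace $\mu$ by an equivalent Beltrami coefficient lying in $M_*^X(\mathbb{D})$. The natural candidate is $s([\mu])$, the complex dilatation of the barycentric extension of $f^{\mu}|_{\mathbb{S}}$. Apply Theorem \ref{EMS} with $\nu = 0$: since $s([0]) = 0$ and $\Phi(0) = 0$, the statement reduces to the equivalence $s([\mu]) \in L_*^X(\mathbb{D}) \iff \Phi(\mu) \in B_*^X(\mathbb{D}^*)$. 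Our hypothesis is the right-hand side, so $s([\mu]) \in L_*^X(\mathbb{D}) \cap M(\mathbb{D}) = M_*^X(\mathbb{D})$. Because $\pi \circ s = \mathrm{id}_T$, we have $[\mu] = \pi(s([\mu])) \in \pi(M_*^X(\mathbb{D})) = T_*^X$, whence $\varphi = \beta([\mu]) \in \beta(T_*^X)$.

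There is essentially no obstacle here, since all of the serious work has been done in Theorems \ref{Bers} and \ref{EMS}; the corollary is just a matter of combining the two via the factorization $\beta \circ \pi = \Phi$ and the existence of the section $s$ (Corollary \ref{DE}). The only minor point to verify en route is the triviality $s([0]) = 0$, which follows from the conformal naturality of the barycentric extension applied to the identity map.
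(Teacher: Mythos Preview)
Your proof is correct and follows exactly the same approach as the paper: Theorem~\ref{Bers} for the forward inclusion, and Theorem~\ref{EMS} with $\nu=0$ for the reverse inclusion. The paper's proof is simply a two-sentence sketch of what you have spelled out in full; in particular your argument via $s([\mu]) \in M_*^X(\mathbb{D})$ and $\pi\circ s = \mathrm{id}_T$ is precisely the content implicit in the paper's phrase ``by taking $\nu=0$ in Theorem~\ref{EMS}.''
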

 \begin{proof}
 Theorem \ref{Bers} implies that $\beta(T_*^X) \subset \beta(T)\cap B_*^X(\mathbb{D}^*)$. By taking $\nu = 0$ in Theorem \ref{EMS}, 
 we see that the converse inclusion is also true.
 \end{proof}
 
%%%%%%%%%%%%%%%%%%%%%%%%%%%%%%%

\section{Holomorphic split submersion}

In this section, we will endow $T_{*}^{X}$ with a complex Banach manifold structure. 
This is done by the investigations of the Bers Schwarzian derivative map $\Phi:M_*^X(\mathbb{D})\to B_*^X(\mathbb{D}^*)$
given in Theorem \ref{Bers} and the section $s$ to $\pi: M_*^X(\mathbb{D}) \to T_{*}^{X}$
induced by the barycentric extension in Corollary \ref{DE}. We note that the image of $\Phi$ is 
$\beta(T_*^X) = \beta(T)\cap B_*^X(\mathbb{D}^*)$ by Corollary \ref{image}, which is an open subset of $B_*^X(\mathbb{D}^*)$

We recall that the right translation $r_\nu$ for any $\nu \in M(\mathbb D)$ defined by
$r_\nu(\mu)=\mu \ast \nu^{-1}$ for every $\mu \in M(\mathbb D)$ is a biholomorphic automorphism of $M(\mathbb D)$.
Concerning the restriction of these automorphisms to $M_*^X(\mathbb D)$, we in particular obtain the following result
for the right translation $r_\nu$ given by a trivial Beltrami coefficient $\nu$, which satisfies 
$\pi \circ  r_\nu=\pi$ for the Teich\-m\"ul\-ler projection $\pi:\mu \mapsto [\mu]$.

\begin{lemma}\label{biholo}
Let $\nu \in M_*^X(\mathbb D)$ such that $[\nu]=[0]$. Then, $r_\nu$ is a biholomorphic automorphism of $M_*^X(\mathbb D)$.
\end{lemma}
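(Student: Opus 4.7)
The plan is to reduce the claim to the set-theoretic statement that $r_\nu$ restricts to a bijection of $M_*^X(\mathbb D)$ onto itself. Once this is established, biholomorphy of the restriction is automatic, since $r_\nu$ is already known to be a biholomorphic automorphism of the ambient $M(\mathbb D)$ and $M_*^X(\mathbb D)$ is relatively open in the closed subspace $L_*^X(\mathbb D) \subset L^\infty(\mathbb D)$. Using $r_\nu^{-1} = r_{\nu^{-1}}$, this bijection follows from the two inclusions
\[
r_\nu(M_*^X(\mathbb D)) \subseteq M_*^X(\mathbb D), \qquad r_{\nu^{-1}}(M_*^X(\mathbb D)) \subseteq M_*^X(\mathbb D).
\]

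For the first inclusion I plan to apply the standard composition formula for complex dilatations. Writing $g = (f^\nu)^{-1}$ and using $|\nu^{-1}(z)| = |\nu(g(z))|$, this gives the pointwise bound
\[
|r_\nu(\mu)(z)| = |(\mu \ast \nu^{-1})(z)| \leq \frac{|\nu(g(z))| + |\mu(g(z))|}{1 - \|\mu\|_\infty \|\nu\|_\infty}.
\]
The heart of the argument is then to control $g$ on complements of unions of horoballs. For this I invoke the classical hyperbolic-geometric fact that any quasiconformal self-homeomorphism of $\mathbb D$ extending to the identity on $\mathbb S$ is at uniformly bounded hyperbolic distance $C$ from the identity, with $C$ depending only on its maximal dilatation (this can be seen, for example, from the general principle that a quasi-isometry of $\mathbb H^2$ with identity extension to the circle at infinity lies at bounded distance from the identity). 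Since $[\nu] = [0]$, this applies to $f^\nu$; combined with the identity $N_C(D_s^{\xi_i}) = D_{s e^{-C}}^{\xi_i}$ that already appeared in the proof of Theorem~\ref{Bers}, it yields $f^\nu(D_{t_0}^{\xi_i}) \subseteq D_{t_0 e^{-C}}^{\xi_i}$ for each $t_0 > 0$ and each $i$. Given $\varepsilon > 0$, I first choose $t_0$ making both $\|\mu|_{\mathbb D \setminus \bigcup_i D_{t_0}^{\xi_i}}\|_\infty$ and $\|\nu|_{\mathbb D \setminus \bigcup_i D_{t_0}^{\xi_i}}\|_\infty$ smaller than $\varepsilon(1 - \|\mu\|_\infty \|\nu\|_\infty)/2$; then, setting $t = t_0 e^{-C}$ and taking $z \in \mathbb D \setminus \bigcup_i D_t^{\xi_i}$, the horoball inclusion above forces $g(z) \in \mathbb D \setminus \bigcup_i D_{t_0}^{\xi_i}$, and the displayed bound gives $|r_\nu(\mu)(z)| < \varepsilon$, as required.

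The second inclusion is handled by the identical argument, after the preliminary observation that $\nu^{-1}$ itself lies in $M_*^X(\mathbb D)$; this in turn is immediate from $|\nu^{-1}(w)| = |\nu(g(w))|$ together with the horoball-mapping property of $g$ just established. The one place I anticipate any real difficulty is the bounded-hyperbolic-distance statement for $f^\nu$: although classical, invoking it cleanly will require either a pointed reference or a short inline justification (for example, by normalizing three boundary fixed points and combining the compactness of normalized $K$-quasiconformal families with the identity boundary condition). Everything after that is routine manipulation of the composition formula.
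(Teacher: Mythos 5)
Your proof is correct, and its overall strategy coincides with the paper's: both reduce the lemma, via the composition formula for complex dilatations, to showing that $f^\nu$ (which restricts to the identity on $\mathbb S$ because $[\nu]=[0]$) maps each horoball $D_t^{\xi_i}$ into a comparable horoball at the same tangent point. Where you differ is in the technical input for that distortion step. The paper transfers to the upper half-plane by the Cayley transformation, reflects $\tilde f_\nu$ across $\mathbb R$, and applies Teichm\"uller's cross-ratio distortion theorem to the four points $(x,t)$, $(x\pm t,0)$, $(x,-t)$ to obtain $H_{t''}\subset\tilde f_\nu(H_t)\subset H_{t'}$ with $t',t''$ depending only on $t$ and $\Vert\nu\Vert_\infty$. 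You instead invoke the uniformly bounded hyperbolic displacement of a $K$-quasiconformal self-map of $\mathbb D$ with identity boundary values and combine it with the identity $N_a(D_t^\xi)=D_{e^{-a}t}^\xi$ already used in the proof of Theorem~\ref{Bers}; this is equally classical and arguably fits the paper's horoball formalism more directly. (Of the two justifications you offer for the displacement bound, the normal-families one --- compactness of normalized $K$-quasiconformal families with identity boundary values --- is the cleaner, since a quasiconformal self-map of $\mathbb D$ is only a coarse quasi-isometry of the hyperbolic plane and the quasi-isometry route would need that extra step spelled out.) You are also more explicit than the paper on two points it leaves implicit: that the inverse $r_{\nu^{-1}}$ must be checked as well, with $\nu^{-1}\in M_*^X(\mathbb D)$ following from $|\nu^{-1}(w)|=|\nu(g(w))|$ and the same horoball estimate, and that holomorphy of the restriction as a map into the closed subspace $L_*^X(\mathbb D)$ is inherited from the ambient biholomorphism. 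No gaps.
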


\begin{proof}
We have only to prove that $r_\nu(\mu)$ belongs to $M_*^X(\mathbb D)$ for every $\mu \in M_*^X(\mathbb D)$.
The chain rule of complex dilatations implies that
$$
|r_\nu(\mu)\circ f^\nu(z)|=\frac{|\mu(z)-\nu(z)|}{|1-\overline{\nu(z)}\mu(z)|}
$$
for $z \in \mathbb D$. Then, it suffices to show that 
the image $f^\nu(D_t^{\xi})$ of a horoball $D_t^{\xi}$ for any $\xi \in X$ and $t>0$
is contained in a horoball $D_{t'}^{\xi}$ for some $t'>0$.

We may consider this problem on the upper half-plane $\mathbb U$ under the Cayley transformation
$\phi_\xi:\mathbb U \to \mathbb D$. Then, the horoball $D_t^{\xi} \subset \mathbb D$ corresponds to 
$H_t \subset \mathbb U$. Let $\tilde f_\nu=\phi_\xi^{-1} \circ f_\nu \circ \phi_\xi$, which extends to 
the boundary $\mathbb R$ as the identity. By some distortion theorem of quasiconformal maps, we can show that
there are constant $t',t''>0$ depending only on $t>0$ and $\Vert \nu \Vert_\infty$ with $t',t'' \to 0$
as $t \to 0$ such that
$H_{t''} \subset \tilde f_\nu(H_t) \subset H_{t'}$, the latter of which is our desired result.
For instance, we take any point $(x,t)$ on $\partial H_t$ and other three points $(x-t,0)$, $(x+t,0)$ and $(x,-t)$.
We may assume that $\tilde f_\nu$
is a quasiconformal self-homeomorphism of $\mathbb C$ by the reflection with respect to the real line.
Then, the distortion theorem of the cross ratio for four points
due to Teich\-m\"ul\-ler (see \cite[Chapter III.D]{Ah66}) implies that
there are such $t',t''>0$ satisfying $t'' \leq {\rm Im}\, f_\nu(x,t) \leq t'$ independently of $x \in \mathbb R$.
\end{proof}

We also see that any equivalent Beltrami coefficients $\mu_1, \mu_2 \in M_*^X(\mathbb D)$ 
are mapped to one another by a biholomorphic automorphism $r_\nu$ of $M_*^X(\mathbb D)$ for some trivial $\nu \in M_*^X(\mathbb D)$.

\begin{proposition}\label{trivial}
For any $\mu_1, \mu_2 \in M_*^X(\mathbb D)$ such that $[\mu_1]=[\mu_2]$,
the composition $\nu=\mu_1^{-1} \ast \mu_2$ belongs to $M_*^X(\mathbb D)$.
\end{proposition}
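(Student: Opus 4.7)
The plan is to prove this by a direct chain-rule computation, with the horoball distortion argument from Lemma \ref{biholo} providing the one nontrivial geometric input.

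First I would set $\nu = \mu_1^{-1}\ast\mu_2$ and put $h := (f^{\mu_1})^{-1}\circ f^{\mu_2}$, so that $\mu_h=\nu$. Because $[\mu_1]=[\mu_2]$, $h$ restricts to the identity on $\mathbb{S}$; in particular $\|\nu\|_\infty$ is strictly below $1$ by the composition rule in $M(\mathbb{D})$, so $\nu\in M(\mathbb{D})$ and only the decay of $\nu$ near the $\xi_j$ remains to verify. The chain rule for complex dilatations applied to $f^{\mu_2}=f^{\mu_1}\circ h$ gives
$$
\mu_2(z)=\frac{\nu(z)+\mu_1(h(z))\,\tau(z)}{1+\overline{\nu(z)}\,\mu_1(h(z))\,\tau(z)},\qquad |\tau(z)|=1,
$$
from which $|1+\overline{\nu}\,\mu_1(h)\,\tau|\le 1+\|\nu\|_\infty\|\mu_1\|_\infty<2$ together with the triangle inequality yield the pointwise estimate $|\nu(z)|\le 2|\mu_2(z)|+|\mu_1(h(z))|$.

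Given $\varepsilon>0$, I would then choose $t>0$ such that $|\mu_i(w)|<\varepsilon$ for every $w\in\mathbb{D}\setminus\bigcup_{j=1}^{n}D_t^{\xi_j}$ and $i=1,2$. Applying the Teichm\"uller cross-ratio distortion argument from the proof of Lemma \ref{biholo} to the trivial quasiconformal map $h^{-1}$ (whose dilatation norm depends only on $\|\mu_1\|_\infty$ and $\|\mu_2\|_\infty$), I obtain $s=s(t)>0$, with $s\to 0$ as $t\to 0$, such that $h^{-1}(D_t^{\xi_j})\subset D_s^{\xi_j}$ for every $j$. Setting $T=\min(t,s)$, any $z\in\mathbb{D}\setminus\bigcup_{j=1}^{n} D_T^{\xi_j}$ lies outside both $\bigcup_j D_t^{\xi_j}$ and $h^{-1}(\bigcup_j D_t^{\xi_j})$, so the pointwise bound gives $|\nu(z)|\le 3\varepsilon$. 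Since $\varepsilon$ was arbitrary, $\nu\in L_*^X(\mathbb{D})\cap M(\mathbb{D})=M_*^X(\mathbb{D})$.

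The only step requiring real care is the horoball-distortion input, since the chain-rule bound compares $|\nu(z)|$ with $|\mu_1(h(z))|$ rather than with $|\mu_1(z)|$. This is handled exactly by the Teichm\"uller distortion estimate already extracted in the proof of Lemma \ref{biholo}, which depends only on the $L^\infty$ norm of the dilatation of the trivial map being applied and therefore transfers without change to $h^{-1}$.
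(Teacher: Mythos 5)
Your proof is correct and follows essentially the same route as the paper's: the chain rule for complex dilatations bounds $|\nu(z)|$ by $|\mu_2(z)|$ and $|\mu_1(h(z))|$, and the Teichm\"ul\-ler cross-ratio distortion estimate from Lemma \ref{biholo}, applied to the trivial map $h^{-1}=(f^{\nu})^{-1}$, controls where $h(z)$ lands relative to the horoballs. The paper writes the same identity in the equivalent form $|\mu_1\circ f^{\nu}(z)|=|\mu_2(z)-\nu(z)|/|1-\overline{\nu(z)}\mu_2(z)|$ and invokes the same horoball argument, so the two arguments coincide in substance.
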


\begin{proof}
The condition $\nu=\mu_1^{-1} \ast \mu_2$ is equivalent to $r_\nu(\mu_2)=\mu_1$.
Then, we have that
$$
|\mu_1 \circ f^\nu(z)|=\frac{|\mu_2(z)-\nu(z)|}{|1-\overline{\nu(z)}\mu_2(z)|}
$$
for $z \in \mathbb D$.
Since $[\nu]=0$, the argument in the proof of Lemma \ref{biholo} concerning the image of a horoball by $f^\nu$
can be also applied to see that if $\mu_1, \mu_2 \in M_*^X(\mathbb D)$ then $\nu \in M_*^X(\mathbb D)$.
\end{proof}

With the aid of these claims, we can show that the Bers Schwarzian derivative map $\Phi$ 
is a holomorphic split submersion onto its image. We note that to endow the Teich\-m\"ul\-ler space
with the complex Banach manifold structure, 
it is enough only to show the existence of a local continuous section to $\Phi$
in our situation (see Corollary \ref{complexstructure} below).

\begin{theorem}\label{submersion}
The Bers Schwarzian derivative map $\Phi:M_*^X(\mathbb{D})\to B_*^X(\mathbb{D}^*)$ 
is a holomorphic split submersion onto its image $\Phi(M_*^X(\mathbb{D}))=\beta(T)\cap B_*^X(\mathbb{D}^*)$. 
\end{theorem}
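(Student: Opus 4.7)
The plan is to exhibit, for each $\mu_0 \in M_*^X(\mathbb{D})$ with $\Phi(\mu_0) = \varphi_0$, a continuous local section $\sigma$ of $\Phi$ defined near $\varphi_0$ and satisfying $\sigma(\varphi_0) = \mu_0$. Together with the holomorphy of $\Phi$ noted after Theorem~\ref{Bers}, this is enough to conclude that $\Phi$ is a holomorphic split submersion in the sense used here, as flagged by the remark preceding the theorem and the upcoming Corollary~\ref{complexstructure}. The natural first candidate is the global map
\[
\sigma_0 := s \circ \beta^{-1}\colon \beta(T) \cap B_*^X(\mathbb{D}^*) \longrightarrow M_*^X(\mathbb{D}),
\]
which makes sense by Corollary~\ref{image} (identifying the image of $\Phi$) and takes values in $M_*^X(\mathbb{D})$ by Corollary~\ref{DE}. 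It is continuous because $\beta$ is a homeomorphism onto its image and $s$ is continuous, and it satisfies $\Phi \circ \sigma_0 = \beta \circ \pi \circ s \circ \beta^{-1} = \mathrm{id}$.

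The issue is that in general $\sigma_0(\varphi_0) = s([\mu_0]) \neq \mu_0$, so I would correct $\sigma_0$ using a biholomorphic automorphism of $M_*^X(\mathbb{D})$ that sends $s([\mu_0])$ to $\mu_0$. Set $\eta := \mu_0^{-1} \ast s([\mu_0])$. Since $[\mu_0] = [s([\mu_0])]$ in $T$, the class $[\eta]$ is trivial; and since $\mu_0, s([\mu_0]) \in M_*^X(\mathbb{D})$, Proposition~\ref{trivial} gives $\eta \in M_*^X(\mathbb{D})$. Lemma~\ref{biholo} then asserts that $r_\eta$ is a biholomorphic automorphism of $M_*^X(\mathbb{D})$. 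A direct computation yields $r_\eta(s([\mu_0])) = s([\mu_0]) \ast s([\mu_0])^{-1} \ast \mu_0 = \mu_0$. Moreover, because $\Phi$ factors through $\pi$ and $\pi$ is unchanged by right translation by a trivial coefficient, we have $\Phi \circ r_\eta = \Phi$. Setting $\sigma := r_\eta \circ \sigma_0$ therefore produces a continuous section of $\Phi$ on $\beta(T) \cap B_*^X(\mathbb{D}^*)$ with $\sigma(\varphi_0) = \mu_0$, which in particular serves as the desired local section.

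The main obstacle, and the reason Lemma~\ref{biholo} and Proposition~\ref{trivial} are needed at all, is the absence of a group structure on $M_*^X(\mathbb{D})$: one cannot simply right-translate by $\mu_0$ to move the construction to the origin, as in the classical universal case, because $r_{\mu_0}$ generally fails to preserve $M_*^X(\mathbb{D})$. The remedy is to right-translate only by \emph{trivial} coefficients, which do preserve $M_*^X(\mathbb{D})$; this allows us to move freely within each Teichm\"uller fiber $\pi^{-1}([\mu]) \cap M_*^X(\mathbb{D})$ while leaving $\Phi$ unchanged, and to adjust the barycentric section within that fiber so that its value at $\varphi_0$ is exactly the prescribed $\mu_0$.
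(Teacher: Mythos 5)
Your reduction step is sound and matches the paper's own device: identifying the image as $\beta(T)\cap B_*^X(\mathbb{D}^*)$ via Corollary \ref{image}, and using Proposition \ref{trivial} and Lemma \ref{biholo} to right-translate a section by the trivial coefficient $\eta=\mu_0^{-1}\ast s([\mu_0])$ so that it passes through the prescribed point $\mu_0$ is exactly what the paper does (with $\sigma(\phi)$ in place of $s([\mu_0])$). The gap is in the section you feed into this machine. The barycentric section $s$, hence $\sigma_0=s\circ\beta^{-1}$, is only real-analytic, not holomorphic: the Douady--Earle extension is conformally natural, but its dependence on $\mu$ is not complex-analytic (the paper itself calls $s$ ``continuous (in fact, real analytic)''). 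A continuous --- or even real-analytic --- section through every point does not yield a \emph{holomorphic split submersion}: that property requires, at each $\mu_0$, that $d_{\mu_0}\Phi$ be surjective with complemented kernel, and the standard route is a local \emph{holomorphic} section $\sigma$ with $\sigma(\phi)=\mu_0$, since then $d_{\mu_0}\Phi\circ d_{\phi}\sigma=\mathrm{id}$ and $d_{\phi}\sigma\circ d_{\mu_0}\Phi$ is a bounded projection splitting $\ker d_{\mu_0}\Phi$. You cannot differentiate a merely continuous section, so your argument establishes only the openness of the image and the homeomorphism statement --- which is precisely what the remark before the theorem says a continuous section is ``enough'' for, namely Corollary \ref{complexstructure} --- but not the theorem itself. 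Note also that Proposition \ref{B} later relies on the surjectivity of $d_0\Phi$, which your construction does not deliver.

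What is missing is the technical core of the paper's proof: the construction of a genuinely holomorphic local section. Taking $\mu=s([\mu])$ so that the quasiconformal reflection $\gamma=f_\phi\circ j\circ f_\phi^{-1}$ is bi-Lipschitz for the hyperbolic metrics, one uses Ahlfors' quasiconformal reflection formula to assign to each $\psi$ in a small ball $B_\varepsilon(\phi)$ a Beltrami coefficient $\nu_\psi$ with $\Phi(\nu_\psi)=\psi$ depending holomorphically on $\psi$; the pointwise estimate
$|\mu_{\psi}(f_\phi(\zeta))|\leqslant C_2\,|\psi(\zeta^*)-\phi(\zeta^*)|\,\rho_{\mathbb{D}^*}^{-2}(\zeta^*)$
is then what guarantees that $\nu_\psi$ actually lands in $M_*^X(\mathbb{D})$ when $\psi,\phi\in B_*^X(\mathbb{D}^*)$. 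Your $r_\eta$-correction can then be applied to this holomorphic section exactly as you propose, and that combination is the paper's proof.
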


\begin{proof}
Since $\Phi: M(\mathbb{D})\to B(\mathbb{D^*})$
is holomorphic and since $M_*^X(\mathbb{D})$ and $B_*^X(\mathbb{D}^*)$ are closed subspaces in the relative topology,
$\Phi:M_*^X(\mathbb{D})\to B_*^X(\mathbb{D}^*)$ is also holomorphic.
It remains to show that $\Phi$ is a split submersion onto its image $\Phi(M_*^X(\mathbb{D}))$.
This is equivalent to showing that for every $\mu \in M_*^X(\mathbb{D})$,
there is a holomorphic map $\sigma:U_\phi \to M_*^X(\mathbb{D})$ defined on some neighborhood $U_\phi \subset 
\Phi(M_*^X(\mathbb{D}))$ of $\phi=\Phi(\mu)$ such that $\sigma(\phi)=\mu$ and $\Phi \circ \sigma={\rm id}_{U_\phi}$.
The existence of some local holomorphic section can be given by a standard argument.
This has been carried out in \cite{HWS} in the case where $X$ is a single point set, and we repeat
such an argument adapted for our case below. 

In order to prove that $\Phi$ is a split submersion,
we supplement the proof in \cite{HWS} here by showing that for any $\mu \in M_*^X(\mathbb{D})$,
there is a local holomorphic section defined on a neighborhood of $\phi=\Phi(\mu)$ that sends $\phi$ to
$\mu$. We assume that there is a local holomorphic section $\sigma:U_\phi \to M_*^X(\mathbb{D})$.
We set $\nu=\mu^{-1} \ast \sigma(\phi)$, which belongs to $M_*^X(\mathbb{D})$ by Proposition \ref{trivial}. 
By Lemma \ref{biholo}, $r_\nu$ is a biholomorphic automorphism of $M_*^X(\mathbb{D})$ which satisfies $\pi \circ r_\nu=\pi$ and
$r_\nu(\sigma(\phi))=\mu$.
Then, we obtain the required local section $r_\nu \circ \sigma$ on $U_\phi$.

In the rest of the proof, we show the existence of a local holomorphic section. 
Let $\phi = \Phi(\mu)$ for a given $\mu \in M_*^X(\mathbb{D})$. 
Without loss of generality, we may assume that $\mu = s([\mu])$, that is, $f^{\mu}$ is the barycentric extension of $f^{\mu}|_{\mathbb{S}}$. Here, $s: T \to M(\mathbb{D})$ is the barycentric section 
which maps $T_*^X$ into $M_*^X(\mathbb{D})$ by Corollary \ref{DE}. For
the quasiconformal homeomorphism $f_\phi=f_{\mu}:\widehat {\mathbb{C}} \to \widehat {\mathbb{C}}$ that is conformal on $\mathbb{D}^*$,
we set
$D=f_\phi(\mathbb{D})$, $D^* = f_\phi(\mathbb{D}^*)$, and $\gamma = f_{\phi}\circ j \circ f_{\phi}^{-1}$
for the reflection $j:\zeta \mapsto \zeta^*$ with respect to $\mathbb{S}$. 
We may assume that $f_{\phi}$ is normalized so that $\lim_{z \to \infty}(f_\phi(z)-z)=0$.
Since the barycentric extension $f^\mu$ is a bi-Lipschitz diffeomorphism with respect to the hyperbolic metric,
we see that so is $f_{\phi}|_{\mathbb{D}}$, and hence, the quasiconformal reflection $\gamma: D \to D^*$ is
a bi-Lipschitz diffeomorphism with respect to the hyperbolic metrics on $D$ and $D^*$.

Ahlfors \cite{Ah63} (see also \cite{GL, Le})
showed that there exists a constant $C_1 \geq 1$ depending only 
on $\Vert\mu\Vert_{\infty}$ such that 
\begin{equation}\label{1}
\frac{1}{C_1}\leqslant |\gamma(z)-z|^2\rho_{D^*}^{-2}(\gamma(z))|\bar{\partial}\gamma(z)|\leqslant C_1
\end{equation}
for every $z\in D$, where $\rho_{D^*}(z)$ is the hyperbolic density on $D^*$. 
We set 
$$
B_{\varepsilon}(\phi)=\{\psi \in B_*^X(\mathbb{D}^*) \mid \Vert\psi - \phi\Vert_{B}< \varepsilon  \}
$$ 
for $\varepsilon > 0$. 
For each $\psi \in B_{\varepsilon}(\phi)$, 
there exists a unique locally univalent holomorphic function $f_{\psi}$ on $\mathbb{D}^*$ 
with the normalization as above such that 
$\mathcal{S}(f_{\psi})= \psi$. Let $g_{\psi} = f_{\psi}\circ f_{\phi}^{-1}|_{D^*}$. 
Then, we have that $\mathcal{S}(g_{\psi})\circ f_\phi (f'_\phi)^2 = \psi - \phi$ and 
$\sup_{z^*\in D^*}\rho_{D^*}^{-2}(z^*)|\mathcal{S}(g_{\psi})(z^*)|=\Vert\psi - \phi\Vert_{B}$.

When $\varepsilon>0$ is sufficiently small, it was proved in \cite{Ah63}
that $g_{\psi}$ is univalent (conformal) and can be extended to a quasiconformal homeomorphism of
$\widehat {\mathbb{C}}$ whose complex dilatation $\mu_{\psi}$ on $D$ has the form 
\begin{equation}\label{2}
 \mu_{\psi}(z)=\frac{\mathcal{S}(g_{\psi})(\gamma(z))(\gamma(z) - z)^2\bar{\partial}\gamma(z)}{2 + \mathcal{S}(g_{\psi})(\gamma(z))(\gamma(z) - z)^2\partial \gamma(z)}.
\end{equation}
We set $U_\phi=B_\varepsilon(\phi)$ for this $\varepsilon>0$.
Then by (\ref{1}), every $\psi \in U_\phi$ satisfies
\begin{equation}\label{3}
|\mu_{\psi}(z)|\leqslant C_2 |\mathcal{S}(g_{\psi})(\gamma(z))|\rho_{D^*}^{-2}(\gamma(z)) \qquad (z\in D)    
\end{equation}
for some constant $C_2>0$, which also depends only on $\Vert\mu\Vert_{\infty}$.

Consequently, $f_{\psi}=g_{\psi}\circ f_\phi$ is conformal on $\mathbb{D}^*$ 
and has a quasiconformal extension to $\widehat {\mathbb{C}}$ whose complex dilatation $\nu_{\psi}$ on $\mathbb{D}$ is given as
\begin{equation}\label{4}
    \nu_{\psi}=\frac{\mu+(\mu_{\psi}\circ f_\phi)\tau}{1+\bar{\mu}(\mu_{\psi}\circ f_\phi)\tau}, 
    \qquad \tau = \frac{\overline{\partial f_\phi}}{\partial f_\phi}. 
\end{equation}
It is well known that $\nu_{\psi}$ depends holomorphically on $\psi$. Now it follows from (\ref{3}) that 
\begin{equation*}
    \begin{split}
        |\mu_{\psi}(f_\phi(\zeta))|&\leqslant C_2 |\mathcal{S}(g_{\psi})(\gamma(f_\phi(\zeta)))|\rho_{D^*}^{-2}(\gamma(f_\phi(\zeta)))\\
        & = C_2 |\mathcal{S}(g_{\psi})(f_\phi(j(\zeta)))|\rho_{D^*}^{-2}(f_\phi(j(\zeta)))\\
        & = C_2 |\psi(j(\zeta))-\phi(j(\zeta))|\rho_{\mathbb{D}^*}^{-2}(j(\zeta))\\
        & = C_2 |\psi(\zeta^*)-\phi(\zeta^*)|\rho_{\mathbb{D}^*}^{-2}(\zeta^*)\\
    \end{split}
\end{equation*}
for every $\zeta \in \mathbb{D}$ with $\zeta^*=j(\zeta) \in \mathbb{D}^*$.

Since $\psi,\, \phi \in B_*^X(\mathbb{D}^*)$,
the above estimate implies that $\mu_{\psi}\circ f \in M_*^X(\mathbb{D})$.
Then, we see from (\ref{4}) that $\nu_{\psi} \in M_*^X(\mathbb{D})$. 
Since $\Phi(\nu_{\psi})=\psi$, we conclude that $\sigma: U_\phi \to M_{*}^{X}(\mathbb{D})$ 
defined by $\sigma(\psi)=\nu_\psi$
is a local holomorphic section to $\Phi$. This completes the proof. 
\end{proof}

\begin{corollary}\label{complexstructure}
The Bers embedding
$\beta: T_*^X \to B_*^X(\mathbb{D}^*)$ is a homeomorphism onto the domain 
$\beta(T) \cap B_*^X(\mathbb{D}^*)$ in $B_*^X(\mathbb{D}^*)$. Hence,
the Teich\-m\"ul\-ler space $T_*^X$ has the complex structure modeled on the complex Banach space $B_*^X(\mathbb{D}^*)$.
Under this complex structure, 
the projection $\pi:M_*^X(\mathbb{D}) \to T_*^X$ is also a holomorphic split submersion.
\end{corollary}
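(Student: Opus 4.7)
The plan is to derive the corollary directly from Theorem \ref{submersion} together with Corollaries \ref{DE} and \ref{image}. The first thing I would check is that the candidate image $\beta(T)\cap B_*^X(\mathbb{D}^*)$ is actually open in $B_*^X(\mathbb{D}^*)$. By Corollary \ref{image} this set coincides with $\Phi(M_*^X(\mathbb{D}))$, and the local holomorphic section $\sigma:U_\phi\to M_*^X(\mathbb{D})$ produced in Theorem \ref{submersion} is defined on a genuine norm-ball $U_\phi=B_\varepsilon(\phi)$ of $B_*^X(\mathbb{D}^*)$; the identity $\Phi\circ\sigma={\rm id}_{U_\phi}$ then forces $U_\phi$ to sit inside $\Phi(M_*^X(\mathbb{D}))$, giving openness at every point of the image.

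Next I would verify that $\beta|_{T_*^X}:T_*^X\to \beta(T)\cap B_*^X(\mathbb{D}^*)$ is a homeomorphism. Bijectivity is immediate: surjectivity onto $\beta(T)\cap B_*^X(\mathbb{D}^*)$ is Corollary \ref{image}, and injectivity is inherited from the Bers embedding of the universal Teich\-m\"ul\-ler space. Since $T_*^X$ carries the quotient topology from $\pi$ and $\beta\circ\pi=\Phi$ is continuous, $\beta$ is continuous. For the inverse, I would use $\sigma$ again: at any $\phi_0\in \beta(T_*^X)$ the composition $\pi\circ\sigma:U_{\phi_0}\to T_*^X$ is continuous and satisfies $\beta\circ(\pi\circ\sigma)=\Phi\circ\sigma={\rm id}_{U_{\phi_0}}$, so by injectivity of $\beta$ it agrees with $\beta^{-1}$ on $U_{\phi_0}$. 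A continuous local inverse at every point of the image gives a global continuous inverse.

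Once $\beta$ is a homeomorphism onto an open subset of $B_*^X(\mathbb{D}^*)$, it transports the obvious complex Banach manifold structure from this domain to $T_*^X$; under this structure $\beta$ is tautologically biholomorphic onto its image, so $\pi=\beta^{-1}\circ\Phi$ inherits holomorphy from $\Phi$. For the split-submersion property of $\pi$ at a point $[\mu]\in T_*^X$ with $\phi_0=\beta([\mu])=\Phi(\mu)$, I would pick a local holomorphic section $\sigma$ of $\Phi$ at $\phi_0$ with $\sigma(\phi_0)=\mu$ (provided by Theorem \ref{submersion}) and set $\widetilde\sigma=\sigma\circ\beta$ on $\beta^{-1}(U_{\phi_0})$. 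This is holomorphic, sends $[\mu]$ to $\mu$, and satisfies $\pi\circ\widetilde\sigma=\beta^{-1}\circ\Phi\circ\sigma\circ\beta={\rm id}$, so $\pi$ is a holomorphic split submersion. The only subtle point I anticipate is the initial openness check, since Theorem \ref{submersion} is stated as a split submersion \emph{onto its image}; but the explicit proof actually exhibits $U_\phi$ as an ambient norm-ball, which is exactly what makes the image open and lets the complex structure descend cleanly to $T_*^X$.
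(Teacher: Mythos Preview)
Your proposal is correct and follows essentially the same route as the paper: continuity of $\beta$ via $\beta\circ\pi=\Phi$, continuity of $\beta^{-1}$ via the local sections $\pi\circ\sigma$ from Theorem~\ref{submersion}, and the split-submersion property of $\pi$ obtained by conjugating that of $\Phi$ through the biholomorphism $\beta$. Your argument is simply more explicit than the paper's terse version---in particular, your openness check via the norm-ball sections is a clean self-contained alternative to the paper's tacit appeal to the classical fact that $\beta(T)$ is open in $B(\mathbb{D}^*)$.
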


\begin{proof}
%By the continuity of the section $s:T_*^X \to M_*^X(\mathbb{D})$ induced by the barycentric extension and 
By the continuity of $\Phi:M_*^X(\mathbb{D}) \to B_*^X(\mathbb{D}^*)$, we see that $\beta$ is continuous.
For the other direction, the existence of the local continuous section to $\Phi$
shown in Theorem \ref{submersion} together with the continuity of the projection $\pi$
ensures the continuity of the inverse $\beta^{-1}:\beta(T) \cap B_*^X(\mathbb{D}^*) \to T_*^X$.
These facts prove that $\beta$ is a homeomorphism onto the image.
\end{proof}

Finally, we note that the corresponding result to Proposition \ref{L} is also valid for
the space of the holomorphic quadratic differentials.

\begin{proposition}\label{B}
For any $X = \{\xi_1,\dots, \xi_n\} \subset \mathbb{S}$, 
$B_*^X(\mathbb{D}^*) = B_*^{\xi_1}(\mathbb{D}^*)+\dots + B_{*}^{\xi_n}(\mathbb{D}^*)$.
\end{proposition}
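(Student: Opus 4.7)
The inclusion $\supset$ is immediate from $B_*^{\xi_i}(\mathbb{D}^*) \subset B_*^X(\mathbb{D}^*)$. For the reverse inclusion, the obstruction compared with Proposition~\ref{L} is that the naive sector decomposition by characteristic functions destroys holomorphicity. My plan is therefore to lift the splitting problem to the Beltrami coefficient side, where Proposition~\ref{L} already does the work, and then push it forward via the linearization of $\Phi$ at the origin. A preliminary point, verified by the same $\varepsilon/2$ argument used in Section~3 for $L_*^X(\mathbb{D})$, is that each $B_*^Y(\mathbb{D}^*)$ is closed in $B(\mathbb{D}^*)$ for any finite $Y \subset \mathbb{S}$.

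The key ingredient is a local holomorphic section of $\Phi$ at the origin. Applying Theorem~\ref{submersion} with $\mu = 0$ (so $f_\phi = \mathrm{id}$ and the quasiconformal reflection $\gamma$ reduces to the Euclidean reflection $j$), I obtain a local holomorphic section $\sigma\colon U_0 \to M_*^X(\mathbb{D})$ on a neighborhood $U_0 \subset B_*^X(\mathbb{D}^*)$ of $0$ satisfying $\sigma(0) = 0$ and $\Phi \circ \sigma = \mathrm{id}_{U_0}$. Differentiating at $0$ gives $d\Phi_0 \circ d\sigma_0 = \mathrm{id}_{B_*^X(\mathbb{D}^*)}$, so the bounded complex-linear map $d\Phi_0 \colon L_*^X(\mathbb{D}) \to B_*^X(\mathbb{D}^*)$ is a surjection with right inverse $d\sigma_0$. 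For a given $\varphi \in B_*^X(\mathbb{D}^*)$, I set $\mu = d\sigma_0(\varphi) \in L_*^X(\mathbb{D})$, apply Proposition~\ref{L} to decompose $\mu = \mu_1 + \dots + \mu_n$ with $\mu_i \in L_*^{\xi_i}(\mathbb{D})$, and obtain $\varphi = \sum_{i=1}^n d\Phi_0(\mu_i)$ by linearity.

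The main step remaining, and what I expect to be the most delicate point, is verifying that $d\Phi_0(\mu_i) \in B_*^{\xi_i}(\mathbb{D}^*)$ for each $i$. My approach is to scale: for sufficiently small $t > 0$ we have $t\mu_i \in M_*^{\xi_i}(\mathbb{D})$, so Theorem~\ref{Bers} applied with the singleton $\{\xi_i\}$ in place of $X$ forces $\Phi(t\mu_i) \in B_*^{\xi_i}(\mathbb{D}^*)$. Since $\Phi$ is Fr\'echet holomorphic at $0$, the norm-limit $d\Phi_0(\mu_i) = \lim_{t \to 0^+} t^{-1}\Phi(t\mu_i)$ then lies in $B_*^{\xi_i}(\mathbb{D}^*)$ by the closedness noted above, finishing the decomposition.
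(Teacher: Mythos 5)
Your proposal is correct and follows essentially the same route as the paper: both reduce the statement to Proposition~\ref{L} via the linear surjection $d_0\Phi\colon L_*^X(\mathbb{D})\to B_*^X(\mathbb{D}^*)$ coming from the submersion property of Theorem~\ref{submersion}. The only difference is that you explicitly justify the inclusion $d_0\Phi(L_*^{\xi_i}(\mathbb{D}))\subset B_*^{\xi_i}(\mathbb{D}^*)$ by the scaling-and-closedness argument, a step the paper leaves implicit by simply invoking $d_0\Phi(L_*^{\xi_i}(\mathbb{D}))=B_*^{\xi_i}(\mathbb{D}^*)$ for the singleton case.
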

\begin{proof}
For the Bers Schwarzian derivative map $\Phi: M_*^{X}(\mathbb{D}) \to B_*^X(\mathbb{D}^*)$, we consider 
its derivative $d_0\Phi: L_{*}^{X}(\mathbb{D})\to B_{*}^{X}(\mathbb{D}^*)$ at $0 \in M_*^{X}(\mathbb{D})$. By Proposition \ref{L}, $L_*^X(\mathbb{D}) = L_*^{\xi_1}(\mathbb{D})+\dots +L_*^{\xi_n}(\mathbb{D})$. Since $d_0\Phi$ is a linear map, we see that 
\begin{equation*}
    \begin{split}
      d_0\Phi(L_*^X(\mathbb{D})) & = d_0\Phi(L_*^{\xi_1}(\mathbb{D})+\dots +L_*^{\xi_n}(\mathbb{D})) \\
     & = d_0\Phi(L_*^{\xi_1}(\mathbb{D}))+\dots +d_0\Phi(L_*^{\xi_n}(\mathbb{D})) \\
     & = B_*^{\xi_1}(\mathbb{D}^*)+\dots+B_*^{\xi_n}(\mathbb{D}^*).\\
    \end{split}
\end{equation*}
Since $\Phi$ is a submersion by Theorem \ref{submersion}, $d_0\Phi: L_{*}^{X}(\mathbb{D})\to B_{*}^{X}(\mathbb{D}^*)$ is surjective, 
namely, $d_0\Phi(L_*^X(\mathbb{D}^*)) = B_*^X(\mathbb{D}^*)$. This completes the proof of Corollary \ref{B}.
\end{proof}

%%%%%%%%%%%%%%%%%%%%%%%%%%%%%%%

\end{document}